\documentclass[11pt,reqno,oneside]{amsart}

\usepackage{a4wide,amsthm,amsfonts}
\usepackage{latexsym}
\usepackage{amsmath}
\usepackage{amssymb}
\usepackage[numbers]{natbib}

\theoremstyle{plain}
\newtheorem{Thm}{Theorem}[section]
\newtheorem{Veta}[Thm]{Theorem}
\newtheorem{Lemma}[Thm]{Lemma}
\newtheorem{Tvrzeni}[Thm]{Proposition}
\newtheorem{Dusledek}[Thm]{Corollary}
\newtheorem{Definice}[Thm]{Definition}
\theoremstyle{definition}
\newtheorem{Priklad}[Thm]{Example}
\theoremstyle{remark}
\newtheorem{remark}[Thm]{Remark}

\numberwithin{equation}{section}

\renewcommand{\P}{\ensuremath{\mathbb{P}\,}}
\newcommand{\E}{\ensuremath{\mathbb{E}\,}}
\newcommand{\Ref}[1]{(\ref{#1})}
\newcommand{\R}{\ensuremath{\mathbb{R}}}
\newcommand{\N}{\ensuremath{\mathbb{N}}}

\renewcommand{\d}{\ensuremath{{\rm d}}}

\newcommand{\konv}[1]{\mbox{${} \atop \stackrel{\relbar\joinrel\mkern-2mu \relbar\joinrel\mkern-2mu \relbar\joinrel\mkern-2mu
                      \relbar\joinrel\mkern-2mu \relbar\joinrel\mkern-2mu \relbar\joinrel\mkern-2mu \relbar\joinrel\mkern-2mu
                      \longrightarrow}{\scriptstyle{#1 \rightarrow+\infty}}$} }

\begin{document}
\title[]
{STOCHASTIC AFFINE EVOLUTION EQUATIONS WITH MULTIPLICATIVE FRACTIONAL NOISE}
\author{B.~Maslowski}
\address{
Faculty of Mathematics and Physics, Charles University in Prague,
Sokolovsk\'{a} 83, 186 75 Prague 8, Czech Republic}
\email{maslow@karlin.mff.cuni.cz}
\author{J.~\v{S}nup\'{a}rkov\'{a}}
\address{Department of Mathematics, Faculty of Chemical Engineering, University of Chemical Technology Prague,
Studentsk\'{a} 6, 166 28 Prague 6 -- Dejvice, Czech Republic}
\email{snuparkj@vscht.cz}
\thanks{
\textit{Keywords:} Geometric Fractional Brownian Motion, Stochastic Differential Equations in Hilbert Space, Stochastic Bilinear Equation\\
\indent
\textit{AMS Classification:} 60H15, 60G22\\
\indent This work was partially supported by the GA\v{C}R grant no.
15-08819S.}

\begin{abstract}
{\it A stochastic affine evolution equation with bilinear noise term is studied where the
driving process is a real--valued fractional Brownian motion. Stochastic integration is understood in the Skorokhod sense. Existence and uniqueness of weak solution is proved and some results on the large time dynamics are obtained. }
\end{abstract}

\maketitle

\section{Introduction}

In the paper the formula for stochastic evolution system
generated by equation with bilinear stochastic term and affine drift term is studied. Existence and uniqueness of solutions is proved and the relation between weak and ``mild'' form of solutions is investigated.
Some peculiarities of large time behaviour are also demonstrated. The results obtained for the equation in the general infinite--dimensional form are applied to linear stochastic PDE of second order.

Stochastic differential equations in Hilbert spaces
with multiplicative white noise have been studied in numerous
papers, e.g. G.~Da Prato, M.~Iannelli, L.~Tubaro~\cite{daprato1},
\cite{daprato2}, F.~Flandoli~\cite{flandoli2}, and in Chapter~6
of the monograph by G.~Da Prato and J.~Zabczyk~\cite{daprato}.
The solution to such equations may be viewed as a generalization
of the geometric Brownian motion, which has a wide range of
applications. In all these cases the driving process is the
Brownian motion. Later, S. Bonaccorsi in~\cite{bonaccorsi}
studied mild solutions of equations with additional nonlinear
terms in the drift and diffusion parts, defined by means of the
stochastic evolution system induced by the bilinear equation.

Analogous results have been obtained for bilinear evolution
equations of the same type with fractional Gaussian noise  by
T.~Duncan, B.~Maslowski, B.~Pasik--Duncan~\cite{duncan} (for
\mbox{$H>1/2$}, where $H$ denotes the Hurst parameter of the driving
fractional Brownian motion) and J.~\v Snu\-p\'{a}r\-kov\'{a}
\cite{snuparkova} (for $H<1/2$). In these papers the stochastic integral is understood in the Skorokhod sense, i.e. as the adjoint operator to the Malliavin derivative. On the other hand, semilinear evolution equations with bilinear Stratonovich noise have been studied in \cite{GMS}. It was shown that the equation defines a random dynamical system (which is not true in the case of Skorokhod integration) and the long--time behaviour was dealt with.

The present paper is organized as follows. In Section~\ref{kap.1} the
notion of Skorokhod integral with respect to fractional Brownian motion and
its basic properties are recalled. Section~\ref{kap.2} is devoted
to an extension of a result from~\cite{duncan} on the existence
of a weak solution to the bilinear equation. In
Section~\ref{kap.3} the existence and uniqueness of the mild
solution is proved (Theorem~\ref{mild.res}). If the perturbation $F$ does not
depend on the solution process, the mild solution  of the
corresponding affine equation is the weak one
(Theorem~\ref{main.nelin}). Unlike in the case of the standard Brownian motion, this is no longer true if the equation is semilinear (the perturbation $F$ depends on the solution) as shown by a simple counterexample. Note that
it is not completely clear how to define the candidate on the
``random evolution system'', as explained in Example~\ref{priklad}. Following the ideas
from~\cite{bonaccorsi} this system is used to define the mild
solution of an equation with additional nonlinearity in the drift
part for $H>1/2$. While the mild formulation implies a weak one
in the Wiener case it need not  be true in the case of fractional
Brownian motion unless the perturbation is independent of the
solution process. It is shown that this ``mild'' solution
satisfies certain different equation in the weak sense.

Some large--time behaviour results are also proved. Sections \ref{jzn_mild_res} and \ref{jzn_slab_res} are devoted to the proof of uniqueness of solutions to the affine equation. This problem is nontrivial because the Gronwall lemma is not applicable as in the case of standard Brownian motion. Instead, we prove uniqueness of the mild solution to the bilinear equation inductively, showing uniqueness of the coefficients in the Wiener chaos expansions, and using this result we prove the uniqueness of weak solutions to the affine equation.

\section{Preliminaries}\label{kap.1}
Let $(\Omega,\mathcal{F},\P)$ be a complete probability
space. A stochastic process $B^H=\{B^H_t,t\in [0,T]\}$ is a {\bf fractional Brownian motion} with Hurst parameter $H\in
(0,1)$ if it is a real--valued centered Gaussian process with the
covariance function given by
$$\E[B^H_t B^H_s]=\frac{1}{2}\big(s^{2H}+t^{2H}-|s-t|^{2H}\big),\ s,t\geq 0.$$
In what follows Hurst parameter $H>1/2$ is assumed.

Define the linear operator $\mathcal{K}_H^*:{\mathcal
E}\rightarrow L^2([0,T])$ as
\begin{align*}
\big(\mathcal{K}_H^* f\big)(t)&=C_H\Gamma\Big(H-\frac{1}{2}\Big)
t^{\frac{1}{2}-H}
\Big(I^{H-\frac{1}{2}}_{T-}f_{H-\frac{1}{2}}\Big)(t),
\end{align*}
where $f_{H-\frac{1}{2}}(t)=t^{H-\frac{1}{2}}f(t),\ t\in[0,T],$
$f\in\mathcal E$ is a step function of the form
$$\varphi=\sum_{k=0}^{N-1} a_k\,I_{(t_k,t_{k+1}]},$$
for some $N\in \N$, $0=t_0<t_1<\ldots<t_{N}=T$, $a_k\in \R$,
$k=0,\ldots,N$, $I^{H-\frac{1}{2}}_{T-}$ is a Rie\-mann--Liou\-ville
fractional right--sided integral defined as
$$\big(I^{H-\frac{1}{2}}_{T-}f\big)(t)=\frac{1}{\Gamma(H-\frac{1}{2})}\int_t^T \frac{f(s)}{(s-t)^{\frac{3}{2}-H}}
\d s\quad {\rm for}\  {\rm a.e.}\ t\in[0,T],$$ and
$$C_H= \sqrt{\frac{H(2H-1)}{{\rm
B}\left(2-2H,\,H-\frac{1}{2}\right)}}.$$

Using the operator $\mathcal{K}_H^*$ define the scalar product on
$\mathcal{E}$ as
$$\langle\varphi,\psi\rangle_{\mathcal{H}}:=\big\langle\mathcal{K}^*_H(\varphi),\mathcal{K}^*_H(\psi)\big\rangle_{L^2([0,T])},\ \varphi,\psi\in\mathcal E.$$
Denote  by $(\mathcal{H},\langle\,.\,,\,.\,\rangle_{\mathcal{H}})$
the Hilbert space obtained as the completion of $\mathcal{E}$
with respect to the scalar product
$\langle\,.\,,\,.\,\rangle_{\mathcal{H}}$ and let
$\|\,.\,\|_{\mathcal{H}}$ be the norm induced by
$\langle\,.\,,\,.\,\rangle_{\mathcal{H}}$.

For $\varphi\in\mathcal{E}$ define the stochastic integral with
respect to the fractional Brownian motion
$$I(\varphi)\equiv\int_0^T \varphi(s)\,dB^H_s:=\sum_{k=0}^{N-1} a_k\big(B^H(t_{k+1})-B^H(t_k)\big).$$
Since
$$\E\left[\int_0^T \varphi(s)\d B^H_s\int_0^T \psi(s)\d
B^H_s\right]=\langle\varphi,\psi\rangle_{\mathcal{H}},\
\varphi,\psi\in\mathcal E,
$$
(see \cite{alos}), the integral can be uniquely extended to
$\mathcal{H}$ (the standard notation
$I(\varphi)=B^H(\varphi)=\int_0^T \varphi(r)\d B^H_r$ is also
used) and the operator $\mathcal{K}^*_H$ provides an isometry
between spaces $(\mathcal{H},\|\,.\,\|_{\mathcal{H}})$ and
$L^2\big(\Omega\big)$.

Let $\mathcal{S}$ be a set of smooth cylindrical random variables
of the form
\begin{equation}\label{cyl.r.v.}
F=f\big(B^H(\varphi_1),\ldots,B^H(\varphi_n)\big),
\end{equation}
where $n\geq 1$, $f\in\mathcal{C}^{\infty}_b(\R^n)$ ($f$ and all
its partial derivatives are bounded) and
$\varphi_i\in\mathcal{H},$ $i=1,\ldots,n$. The {\bf derivative
operator (Malliavin derivative)} of a smooth cylindrical random
variable $F$ of the form~\Ref{cyl.r.v.} is an
$\mathcal{H}$--valued random variable
$$D^H F= \sum_{i=1}^n \frac{\partial f}{\partial x_i} \big(B^H(\varphi_1),\ldots,B^H(\varphi_n)\big)\varphi_i.$$
The derivative operator $D^H$ is closable from $L^p(\Omega)$ into
$L^p(\Omega;\mathcal{H})$ for any $p\in[1,+\infty)$. Let
$\mathbb{D}^{1,p}_H$ be the Sobolev space obtained as a closure
of $\mathcal{S}$ with respect to the norm
$$\|F\|_{1,p}:=\left(\E\big[|F|^p\big]+\E\big[\|D^H F\|^p_{\mathcal{H}}\big]\right)^{1/p}$$
for any $p\in[1,+\infty)$. Similarly, given a Hilbert space
$\tilde V\subset\mathcal{H}$, set $\mathbb{D}^{1,p}_H(\tilde V)$
for the corresponding Sobolev space of $\tilde V$--valued random
variables.
\begin{Definice}
The \textbf{divergence operator (Skorokhod integral)}
$\delta_H:{\rm Dom}\,\delta_H\rightarrow L^2(\Omega)$ is defined
as the adjoint operator of the derivative ope\-rator
$D^H:L^2(\Omega) \rightarrow L^2(\Omega;\mathcal{H})$, i.e. for
any $u\in {\rm Dom}\,\delta_H$ the duality relationship
$$\E\big[F\delta_H(u)\big]=\E\big[\langle D^H F,u\rangle_{\mathcal{H}}\big]$$
holds for any $F\in\mathbb{D}^{1,2}_H$.

A random variable $u\in L^2(\Omega;\mathcal{H})$ belongs to the
domain ${\rm Dom}\,\delta_H$ if there exists a constant
$c_u<+\infty$ depending only on $u$ such that
$$\big|\E\big[\langle D^H F,u\rangle_{\mathcal H}\big]\big|\leq c_u \|F\|_{L^2(\Omega)}$$
for any $F\in\mathcal{S}$.
\end{Definice}

The useful facts listed below can be found e.g.
in~\cite{nualart1}. Let $|\mathcal{H}|\subset \mathcal{H}$ be a
linear space of measurable functions $\varphi$ on $[0,T]$ such
that
$$\|\varphi\|_{|\mathcal{H}|}^2=\alpha_H \int_0^T\int_0^T |\varphi(r)||\varphi(s)||r-s|^{2H-2}\d r\d s<+\infty,$$
where $\alpha_H=H(2H-1)$. Then $\mathcal{E}$ is dense in
$|\mathcal{H}|$ and $(|\mathcal{H}|,\|\,.\,\|_{|\mathcal{H}|})$
is a~Banach space. Moreover,
$$L^2([0,T])\subset L^{1/H}([0,T])\subset |\mathcal{H}|\subset \mathcal{H},$$
thus there exists a constant $K_e<+\infty$ such that
\begin{equation}\label{vnor_K_H}
\|\mathcal{K}_H^*(\varphi)\|_{L^2([0,T])}=\|\varphi\|_{\mathcal{H}}\leq
K_e \|\varphi\|_{L^2([0,T])}
\end{equation}
for any $\varphi\in\mathcal{H}$. Note that
\begin{equation}\label{int_inkluse}
\mathbb{D}^{1,2}_H(|\mathcal{H}|)\subset
\mathbb{D}^{1,2}_H(\mathcal{H})\subset {\rm Dom}\,\delta_H
\end{equation}
and for some constant $\tilde C_{H,2}<+\infty$
$$\E\big[\delta_H^2(u)\big]\leq \tilde C_{H,2} \left(\E\big[\|u\|^2_{|\mathcal{H}|}\big] +
\E\big[\|D^H
u\|^2_{|\mathcal{H}|\otimes|\mathcal{H}|}\big]\right),\
u\in\mathbb{D}^{1,2}_H(|\mathcal{H}|),$$ 
holds, where
$\mathbb{D}^{1,p}_H(|\mathcal{H}|)$ $\big(p\in(1,+\infty)\big)$
contains processes $u\in \mathbb{D}^{1,p}_H(\mathcal{H})$ such
that $u\in|\mathcal{H}|$, $D^Hu\in
|\mathcal{H}|\otimes|\mathcal{H}|\quad \!\P\textrm{--}\,{\rm a.s.}$ and
$$\E\big[\|u\|^p_{|\mathcal{H}|}\big]+\E\big[\|D^Hu\|^p_{|\mathcal{H}|\otimes|\mathcal{H}|}\big]<+\infty.$$
The normed linear space
$\big(|\mathcal{H}|\otimes|\mathcal{H}|,\|\,.\,\|_{|\mathcal{H}|\otimes|\mathcal{H}|}\big)$
is defined in a similar way as
$(|\mathcal{H}|,\|\,.\,\|_{|\mathcal{H}|})$ (for a precise
definition see e.g.~\cite{nualart1}). Hence, for some constant
$C_{H,2}<+\infty$
\begin{equation}\label{odhad.int}
\E\big[\delta_H^2(u)\big]\leq C_{H,2}
\left(\E\big[\|u\|^2_{L^{1/H}([0,T])}\big] + \E\big[\|D^H
u\|^2_{L^{1/H}([0,T]^2)}\big]\right),\
u\in\mathbb{D}^{1,2}_H(|\mathcal{H}|).
\end{equation}
Since the process $B^H$ has an integral representation (see e.g.
\cite{nualart1})
\begin{equation}
B^H_t=\int_0^t \big({\mathcal K}_H^*I_{(0,t]}\big)(s)\d W_s,\
t\geq 0,\label{reprezentace}
\end{equation}
where $W=\{W_t,t\geq 0\}$ is a Wiener process on
$(\Omega,\mathcal{F},\P)$, similar relations are valid for
derivatives and divergence operators, i.e.
\begin{itemize}
\item[(i)] for any $F\in\mathbb{D}^{1,2}_W$
$$\mathcal{K}^*_H (D^H F) = D^W F,$$
where $D^W$ denotes the derivative operator with respect to $W$
and $\mathbb{D}^{1,2}_W$ the correspon\-ding Sobolev space,
\item[(ii)] ${\rm Dom}\,\delta_W=\mathcal{K}^*_H({\rm Dom}\,\delta_H)$ and
\begin{equation}\label{K_int}
\delta_H(u)=\delta_W(\mathcal{K}^*_H u)
\end{equation}
for any $u\in {\rm Dom}\,\delta_H$, where $\delta_W$ denotes the
divergence operator with respect to $W$.
\end{itemize}
\begin{remark}\label{pozn}
The construction (and the properties) of Malliavin derivative and
Skorokhod integral for Hilbert space--valued random variables are
completely analogous.
\end{remark}

\section{Random evolution system}\label{kap.2}
In this short overview section, a result from \cite{duncan} is
slightly extended to obtain a random two--parameter evolution
system representing the solution to the equation
\begin{equation}\label{ns.rce}
\begin{array}{rcl}
{\rm d} Y_t&=&AY_t\d t+BY_t\d B^H_t,\ t>s,\\
Y_s&=&x,
\end{array}
\end{equation}
in a separable Hilbert space $V$ on a finite interval $[0,T]$
with general initial time $s\in [0,T]$ and deterministic initial
value $x\in V$. The driving process $\{B^H_t,t\geq 0\}$ is a
one--dimensional fractional Brownian motion with Hurst parameter
$H>1/2$ on a complete probability space $(\Omega,
\mathcal{F},\mathbb{P})$ and the stochastic integral is
understood in the Skorokhod sense (see \cite{alos} for more
details).

The linear operators $A$ and $B$ on $V$ satisfy
\begin{itemize}
\item[(A1)] the operator $A$ is closed and densely defined with the domain $D:={\rm Dom}(A)$,
\item[(A2)] the resolvent set contains all $\lambda\in\mathbb{C}$ such that ${\rm Re}(\lambda)\geq \omega$ for some fixed $\omega\in\R$ and for some constant $M>0$ the resolvent $R(\lambda,A)$ satisfies
$$\|R(\lambda,A)\|_{\mathcal{L}(V)}\leq \frac{M}{|\lambda-\omega|+1}$$
for all $\lambda\in \mathbb{C}$, ${\rm Re}(\lambda)\geq \omega$,
where $\mathcal{L}(V)$ stands for a space of all linear bounded
operators on $V$,

\item[(B2)] the operator $B$ is closed, densely defined and generates a strongly conti\-nuous group $\{S_B(u),u\in\R\}$ on $V$.
\end{itemize}

The conditions (A1) and (A2) imply that the operator $A$
generates an analytic semigroup $\{S_A(t),0\leq t\leq T\}$ on
$V$. The condition (B2) ensures the existence of constants
$M_B\geq 1$, $\omega_B\geq 0$ such that the inequality
\begin{equation}\label{SB}
\|S_B(u)\|_{\mathcal{L}(V)}\leq M_B\exp\{\omega_B |u|\}
\end{equation}
holds for each $u\in\R$.

For simplicity assume that $\omega <0$ (cf.(A2)). Note that since
the operator $-A$ is sectorial, the fractional powers
$(-A)^{\alpha}$ for $\alpha\in(0,1]$ are well--defined (see e.g.
\cite{pazy}), so the following condition makes sense. Suppose that
\begin{itemize}
\item[(B3)] $B^2$ is closed and
\begin{equation}\label{fracAB}
{\rm Dom}(B^2)\supset {\rm Dom}\big((-A)^{\alpha}\big)
\end{equation}
for some $\alpha\in (0,1)$.
\end{itemize}

\noindent
Define the operators $\bar A(t): D\rightarrow V$ as
$$\bar A(t)=A-Ht^{2H-1}B^2$$
for any $t\in [0,T]$.

\begin{Lemma}\label{anal.syst}
Under the assumptions {\rm (A1), (A2), (B2)} and {\rm (B3)} the
system $\{\bar A(t),t\in [0,T]\}$ generates a strongly continuous
evolution system $\{U(t,s),0\leq s\leq t\leq T\}$ on $V$.
\end{Lemma}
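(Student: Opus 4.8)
The plan is to exhibit $\{\bar A(t),t\in[0,T]\}$ as a family of generators of analytic semigroups sharing the common domain $D$, to show that this family is uniformly sectorial and Hölder continuous in $t$, and then to invoke the classical Tanabe--Sobolevskii construction of the evolution system for the ``constant domain'' case.

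\textit{Perturbation step.} First I would check that $B^2$ is $A$--bounded with relative bound zero. Since $B^2$ is closed and ${\rm Dom}(B^2)\supset{\rm Dom}\big((-A)^\alpha\big)$ by (B3), the closed graph theorem makes $B^2(-A)^{-\alpha}$ bounded (here $0\in\rho(A)$ because $\omega<0$, so $(-A)^{-\alpha}$ is well defined). Combining this with the moment inequality $\|(-A)^\alpha x\|\le C\|Ax\|^\alpha\|x\|^{1-\alpha}$ for fractional powers of a sectorial operator and Young's inequality yields, for every $\varepsilon>0$ and $x\in D$, an estimate $\|B^2x\|\le\varepsilon\|Ax\|+C_\varepsilon\|x\|$. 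By the standard theorem on perturbation of generators of analytic semigroups by operators of relative bound zero, each $\bar A(t)=A-Ht^{2H-1}B^2$, with domain $D$, generates an analytic semigroup. Crucially, since $0\le Ht^{2H-1}\le HT^{2H-1}$ uniformly on $[0,T]$, the perturbation estimates, and hence the sector and the resolvent bound, may be chosen independently of $t$: there are $\tilde\omega\in\R$ and $\tilde M>0$ with $\{\lambda\in\mathbb{C}:{\rm Re}(\lambda)\ge\tilde\omega\}\subset\rho(\bar A(t))$ and $\|R(\lambda,\bar A(t))\|_{\mathcal{L}(V)}\le\tilde M/(|\lambda-\tilde\omega|+1)$ for all $t\in[0,T]$.

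\textit{Hölder continuity step.} After replacing $\bar A(t)$ by $\bar A(t)-\lambda_0$ for a fixed $\lambda_0$ with ${\rm Re}(\lambda_0)>\tilde\omega$ so that the operators are boundedly invertible with the same domain $D$, I write $\bar A(t)-\bar A(\tau)=-H\big(t^{2H-1}-\tau^{2H-1}\big)B^2$, hence $(\bar A(t)-\bar A(\tau))\bar A(s)^{-1}=-H\big(t^{2H-1}-\tau^{2H-1}\big)B^2\bar A(s)^{-1}$. The operator $B^2\bar A(s)^{-1}$ is bounded uniformly in $s\in[0,T]$: from $A\bar A(s)^{-1}=I+Hs^{2H-1}B^2\bar A(s)^{-1}$ and the relative bound with $\varepsilon$ chosen so small that $\varepsilon HT^{2H-1}<\tfrac12$, the term $\|B^2\bar A(s)^{-1}\|_{\mathcal{L}(V)}$ can be absorbed into the left-hand side, giving a bound independent of $s$. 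Since $H\in(1/2,1)$ gives $\gamma:=2H-1\in(0,1)$, the map $r\mapsto r^{2H-1}$ is Hölder continuous of exponent $\gamma$ on $[0,T]$ (using $|a^\gamma-b^\gamma|\le|a-b|^\gamma$ for $a,b\ge0$), so $\|(\bar A(t)-\bar A(\tau))\bar A(s)^{-1}\|_{\mathcal{L}(V)}\le C|t-\tau|^\gamma$ for all $s,t,\tau\in[0,T]$.

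\textit{Conclusion and main obstacle.} The uniform sectoriality together with this Hölder estimate are exactly the hypotheses of the Tanabe--Sobolevskii theorem on parabolic evolution equations with constant domain (see e.g.~\cite{pazy}, Chapter~5), which produces a unique strongly continuous evolution system $\{U(t,s),\,0\le s\le t\le T\}$ on $V$ associated with $\{\bar A(t)\}$; undoing the shift by $\lambda_0$ multiplies $U(t,s)$ by $e^{\lambda_0(t-s)}$ and does not affect the conclusion. The main obstacle is the perturbation step: one must verify that $B^2$ genuinely has $A$--relative bound zero, and, more importantly, that \emph{all} constants entering the analytic-semigroup estimates (the common domain, the sector, the resolvent bound, and the bound on $B^2\bar A(s)^{-1}$) can be taken uniform in $t\in[0,T]$. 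Once this uniformity is in place, the Hölder continuity and the final appeal to the evolution-system construction are routine.
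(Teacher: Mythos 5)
Your argument is correct and follows the same route the paper relies on: the paper's proof is simply a citation to \cite{duncan}, where the result is obtained exactly by verifying the classical Tanabe--Sobolevskii conditions (constant domain $D$, uniform sectoriality of $\bar A(t)=A-Ht^{2H-1}B^2$ via the relative bound zero of $B^2$ coming from (B3), and H\"older continuity of $t\mapsto t^{2H-1}$ with exponent $2H-1\in(0,1)$), which is what you do. Your write-up merely supplies the details the paper delegates to the reference, including the uniformity in $t$ and the bound on $B^2\bar A(s)^{-1}$, and is consistent with the paper's subsequent appeal to Theorem~5.2.1 of \cite{tanabe} for the properties of $U(t,s)$.
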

\begin{proof}
See \cite{duncan}.
\end{proof}

\noindent
The system $\{U(t,s),0\leq s\leq t\leq T\}$ satisfies
\begin{align}
&{\rm Im}(U(t,s))\subset D,\nonumber\\
&\|U(t,s)\|_{\mathcal{L}(V)}\leq C_U,\label{odhadU}\\
&\Big\|\frac{\partial}{\partial t}U(t,s)\Big\|_{\mathcal{L}(V)}=\|{\bar A(t)} U(t,s)\|_{\mathcal{L}(V)}\leq \frac{C_U}{t-s},\nonumber\\
&\|{\bar A(t)}U(t,s)({\bar A(s)}-{\bar\omega
I})^{-1}\|_{\mathcal{L}(V)}\leq C_U\nonumber
\end{align}
for some constants $C_U>0$, $\bar\omega\in\R$ and any $0\leq
s<t\leq T$ (see e.g. \cite{tanabe}, Theorem 5.2.1).

\begin{remark}
Instead of (A2) and (B3) we may assume directly that $\{\bar
A(t),t\in [0,T]\}$ generates a strongly continuous evolution
system $\{U(t,s),0\leq s\leq t\leq T\}$ on $V$. Nevertheless, the
condition (A2) can be useful in applications to stochastic
partial differential equations (as shown in \cite{duncan}).
\end{remark}

Let $A^*$ denote the adjoint operator to the operator $A$. Let
${\rm Dom}(A^*)=D^*$ be the domain of $A^*$ and suppose that
\begin{itemize}
\item[(B1)] $D^*\subset {\rm Dom}((B^*)^2)$.
\end{itemize}

\begin{Definice}
A $\big(\mathcal{B}([s,T])\otimes\mathcal{F}\big)$--measurable
stochastic process $\{Y_t,t\in [s,T]\}$ is said to be a
\textbf{weak solution} to the equation {\rm (\ref{ns.rce})} if
for any $y\in D^*$
$$\langle Y_t,y\rangle _V=\langle x,y\rangle _V+\int_s^t \langle Y_r,A^*y\rangle_V\d r+\int_s^t \langle Y_r,B^*y\rangle _V\d B^H_r\quad \!\P\textrm{--}\,a.s.$$
for all $t\in [s,T]$, where the integrals have to be well--defined.
\end{Definice}

\begin{Veta}\label{evol.syst}
Let
\begin{itemize}
\item[(AB)] the operators $A$ and $\{S_B(u),u\in\R\}$ commute on the domain $D$, i.e.
$$S_B(u)Ay=AS_B(u)y$$
for any $u\in\R$ and $y\in D$.
\end{itemize}
The process $\{U_Y(t,s)x,s\leq t\leq T\}$ defined as
\begin{equation}\label{ns.res}
U_Y(t,s)x=S_B(B^H_t-B^H_s)U(t-s,0)x,\ s\leq t\leq T,
\end{equation}
is a weak solution to
the equation~{\rm \Ref{ns.rce}} for any fixed $x\in V$ and
$s\in[0,T]$ under the assumptions {\rm (A1), (A2)} and {\rm (B1),
(B2), (B3)}.
\end{Veta}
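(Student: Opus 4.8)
The plan is to verify the weak-solution identity directly by differentiating the candidate process $U_Y(t,s)x = S_B(B^H_t - B^H_s) U(t-s,0)x$ and pairing against a test vector $y\in D^*$. The key technical device is the Malliavin calculus product rule for the Skorokhod integral: when integrating a process $u$ against $dB^H$, one must account for the ``trace term'' coming from the Malliavin derivative $D^H$ of the integrand. Since $B^H_t - B^H_s = B^H(I_{(s,t]})$ and the group $S_B$ is entire in its (real) argument in the sense that $\frac{d}{du}S_B(u) = BS_B(u) = S_B(u)B$ on $D(B)$, the Malliavin derivative of $S_B(B^H_t-B^H_s)x$ in direction $r$ is $I_{(s,t]}(r)\,BS_B(B^H_t-B^H_s)x$. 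This is precisely what will generate, upon taking the Skorokhod integral, the Itô-type correction $-Ht^{2H-1}B^2$ hidden inside $\bar A(t)$.

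First I would fix $y\in D^* \subset \mathrm{Dom}((B^*)^2)$ (using (B1)) and write $\langle U_Y(t,s)x, y\rangle_V = \langle S_B(B^H_t-B^H_s)U(t-s,0)x, y\rangle_V = \langle U(t-s,0)x, S_B^*(B^H_t-B^H_s)y\rangle_V$, and introduce the scalar random function $g_y(t) = \langle U(t-s,0)x, S_B^*(B^H_t-B^H_s)y\rangle_V$. Second, I would compute the increment $g_y(t) - \langle x, y\rangle_V$ by a formal ``chain rule'' that separates the deterministic $t$-dependence through $U(t-s,0)$ (governed by $\frac{\partial}{\partial t}U(t-s,0)x = \bar A(t-s)U(t-s,0)x$, but note the time-shift subtlety: the generator is $\bar A$ evaluated at $t-s$, whereas the noise increment $B^H_t-B^H_s$ runs on $[s,t]$; here the commutation hypothesis (AB) and the structure of $\bar A(t)=A-Ht^{2H-1}B^2$ make the bookkeeping work) from the stochastic $t$-dependence through $S_B(B^H_t-B^H_s)$. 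Third, the stochastic part produces $\int_s^t \langle U_Y(r,s)x, B^*y\rangle_V\,dB^H_r$ plus a correction term: by the Skorokhod product/integration-by-parts formula, the correction equals $\int_s^t (\text{trace of } D^H) $, which evaluates to $\alpha_H$-type weights integrating $\langle (B^*)^2 S_B^*(\cdot)y, U(\cdot)x\rangle$ against $|r-\cdot|^{2H-2}$, and upon carrying out the inner integral against the covariance density collapses to $\int_s^t H(r-s)^{2H-1}\langle B^2 U_Y(r,s)x, y\rangle_V\,dr$ — wait, more precisely it produces the term that, combined with the $A$-part $\int_s^t \langle A U_Y(r,s)x, y\rangle_V dr$, reconstitutes $\int_s^t \langle \bar A(r-s) U_Y(r,s)x, y\rangle_V\,dr$; matching this against the target $\int_s^t \langle Y_r, A^*y\rangle_V dr$ requires checking that the $Ht^{2H-1}B^2$ correction from $\bar A$ exactly cancels the Malliavin trace correction, leaving only the genuine $A$-drift. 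I would assemble these pieces and conclude the weak identity holds for all $t\in[s,T]$ and all $y\in D^*$.

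The main obstacle I anticipate is the rigorous justification of the product/chain rule at the level of the Skorokhod integral: one cannot simply differentiate, because $U_Y(\cdot,s)x$ is not adapted and the Skorokhod integral is not a limit of Riemann sums in the naive sense. The clean way is to reduce everything, via the isometry (\ref{K_int}) $\delta_H(u)=\delta_W(\mathcal{K}^*_H u)$, to a Skorokhod integral against the underlying Wiener process $W$, where the standard Malliavin-calculus machinery (the formula $\delta_W(Fh) = F\delta_W(h) - \langle D^W F, h\rangle$ and its iterates, plus closability) applies; then one must track how $\mathcal{K}^*_H$ interacts with the indicator $I_{(s,t]}$ and with the singular kernel $|r-\sigma|^{2H-2}$ to recover the factor $H t^{2H-1}$ (equivalently $\frac{d}{dt}\frac{1}{2}t^{2H} = Ht^{2H-1}$, which is the variance derivative of $B^H$). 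One must also verify that all integrands lie in the appropriate domain $\mathbb{D}^{1,2}_H(|\mathcal{H}|)$ so that (\ref{int_inkluse})–(\ref{odhad.int}) guarantee the Skorokhod integrals are well-defined — this uses the bounds (\ref{odhadU}) on $U$, the exponential bound (\ref{SB}) on $S_B$ (with the Gaussian tails of $B^H$ controlling the exponential moments), and hypothesis (B3) to make sense of $B^2 U(t,s)x$ on $\mathrm{Im}(U(t,s))\subset D \subset \mathrm{Dom}((-A)^\alpha)\subset \mathrm{Dom}(B^2)$. Once this domain check and the Wiener-space reduction are in place, the remaining computation is a (somewhat lengthy but routine) Fubini-type manipulation, and the identification with the weak formulation follows by collecting terms.
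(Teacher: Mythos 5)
Your plan is essentially the proof the paper relies on: the paper defers to Theorem~2.3 of the cited work of Duncan, Maslowski and Pasik--Duncan, whose argument is exactly this direct verification via the Skorokhod/Malliavin calculus for $B^H$, where the Malliavin-trace (It\^o-type) correction $H(r-s)^{2H-1}B^2$ cancels the corresponding term in $\bar A(\cdot)$ of the evolution system $U$, with (AB) and (B1) used to commute operators and pair with $A^*y$, $(B^*)^2y$. Your domain checks (reduction to $W$ via $\mathcal{K}^*_H$, membership in $\mathbb{D}^{1,2}_H(|\mathcal{H}|)$ using the bounds on $U$, $S_B$ and Fernique-type estimates) are the same technical ingredients, so the proposal is correct and follows the same route.
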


\begin{proof}
The proof is completely analogous to the proof of Theorem~2.3
in~\cite{duncan}.
\end{proof}

\begin{remark}
The system $\{U_Y(t,s),0\leq s\leq t\leq T\}$ is not a random
continuous evolution system because it does not possess the
standard composition property.
\end{remark}

\section{Perturbed equation}\label{kap.3}

In this section the equation with a perturbation in the drift part is studied.

Let $H>1/2$ and $\{U_Y(t,s),0\leq s\leq t\leq T\}$ be the system
of operators defined as
$$U_Y(t,s)x:=S_B(B^H_t-B^H_s)U(t-s,0)x,\ x\in V,$$
where $\{U(t,s), 0\leq s\leq t\leq T\}$ is a strongly continuous
evolution system associated with operators $\{A-Ht^{2H-1}B^2,t\in
[0,T]\}$ and $\{S_B(u),u\in\R\}$ is a strongly continuous group
associated with operator $B$ satisfying conditions from
Theorem~\ref{evol.syst}. Note that in the previous section it has
been shown that for any fixed $s\in [0,T]$ the process
$\{U_Y(t,s)x,s\leq t\leq T\}$ is a~weak solution to the equation
\begin{equation}\label{bilin.rce}
\begin{array}{rcl}
\d Y_t&=&AY_t\d t + BY_t\d B^H_t,\ t>s,\\
Y_s&=&x\in V.
\end{array}
\end{equation}

\begin{Veta}\label{mild.res}
Let $F:[0,T]\times V \rightarrow V$ be a measurable function
satisfying
\begin{itemize}
\item[${\rm (i)}_{\rm F}$] there exists a function $\bar L\in L^1([0,T])$ such that
$$\|F(t,x)-F(t,y)\|_V\leq \bar L(t)\|x-y\|_V,\ x,y\in V,\,t\in [0,T],$$
\item[${\rm (ii)}_{\rm F}$] for some function $\bar K\in L^1([0,T])$
$$\|F(t,0)\|_V\leq \bar K(t),\ t\in [0,T].$$
\end{itemize}
Then the equation
\begin{equation}\label{mild.rce}
y(t)=U_Y(t,0)x+\int_0^t U_Y(t,r)F\big(r,y(r)\big)\d r
\end{equation}
has a unique solution in the space $\mathcal{C}([0,T];V)$ for
a.e. $\omega\in\Omega$ and any initial value $x\in V$.
\end{Veta}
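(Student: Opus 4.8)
The plan is to prove existence and uniqueness by a standard fixed-point argument applied to the integral operator
$$(\mathcal{T}y)(t):=U_Y(t,0)x+\int_0^t U_Y(t,r)F\big(r,y(r)\big)\d r$$
on the Banach space $\mathcal{C}([0,T];V)$, working $\omega$ by $\omega$ for a.e. fixed $\omega$. The first step is to record, for a fixed $\omega$ outside a null set, the pathwise bound
$$\|U_Y(t,r)\|_{\mathcal{L}(V)}=\|S_B(B^H_t-B^H_r)U(t-r,0)\|_{\mathcal{L}(V)}\leq M_B\,C_U\exp\{\omega_B\,|B^H_t(\omega)-B^H_r(\omega)|\}\leq C(\omega)$$
for all $0\leq r\leq t\leq T$, which follows from \Ref{SB}, \Ref{odhadU} and the $\P$--a.s. continuity (hence boundedness on $[0,T]$) of the sample paths of $B^H$. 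Thus $U_Y(\cdot,\cdot)$ is, pathwise, a uniformly bounded family of operators, and this is what replaces the usual semigroup bound in the deterministic Picard scheme.

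Next I would check that $\mathcal{T}$ maps $\mathcal{C}([0,T];V)$ into itself. For this one needs (a) continuity of $t\mapsto U_Y(t,0)x$, which holds because $t\mapsto U(t,0)x$ is continuous on $[0,T]$ (with the usual care at $t=0$, where $U(0,0)=I$) and $t\mapsto S_B(B^H_t(\omega))$ is strongly continuous as the composition of the strongly continuous group $S_B$ with the continuous path $B^H_\cdot(\omega)$; and (b) continuity of $t\mapsto\int_0^t U_Y(t,r)F(r,y(r))\d r$. For (b), writing $U_Y(t,r)=S_B(B^H_t-B^H_r)U(t-r,0)$ and using $S_B(B^H_t-B^H_r)=S_B(B^H_t)S_B(-B^H_r)$ (legitimate since $S_B$ is a group), the integrand factors as $S_B(B^H_t(\omega))\cdot g(t,r)$ with $g(t,r)=S_B(-B^H_r(\omega))U(t-r,0)F(r,y(r))$; one then combines strong continuity of $S_B(B^H_t(\omega))$ in $t$ with dominated convergence, the integrable majorant $C(\omega)\big(\bar K(r)+\bar L(r)\|y\|_\infty\big)$ coming from ${\rm (i)}_{\rm F}$, ${\rm (ii)}_{\rm F}$ and $\|F(r,y(r))\|_V\leq\bar K(r)+\bar L(r)\|y(r)\|_V$. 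Measurability of $r\mapsto U_Y(t,r)F(r,y(r))$ and Bochner-integrability on $[0,t]$ follow from the same bound together with joint measurability of the evolution system and of $F$.

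Finally, for the contraction/uniqueness step, from ${\rm (i)}_{\rm F}$ and the bound $C(\omega)$ we get
$$\|(\mathcal{T}y_1)(t)-(\mathcal{T}y_2)(t)\|_V\leq C(\omega)\int_0^t \bar L(r)\,\|y_1(r)-y_2(r)\|_V\,\d r.$$
Iterating this inequality $n$ times gives
$$\|\mathcal{T}^n y_1-\mathcal{T}^n y_2\|_{\mathcal{C}([0,T];V)}\leq \frac{\big(C(\omega)\int_0^T\bar L(r)\d r\big)^n}{n!}\,\|y_1-y_2\|_{\mathcal{C}([0,T];V)},$$
which is the point where $\bar L\in L^1([0,T])$ (rather than $L^\infty$) suffices: since $\sum_n (C(\omega)\|\bar L\|_{L^1})^n/n!<+\infty$, some iterate $\mathcal{T}^n$ is a strict contraction, so the weighted/iterated Banach fixed-point theorem yields a unique fixed point $y(\cdot,\omega)\in\mathcal{C}([0,T];V)$. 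I expect the only genuine subtlety to be the continuity in $t$ of the convolution term through the $U_Y(t,r)$ factor — isolating the troublesome $S_B(B^H_t-B^H_r)$ dependence on $t$ by the group splitting above, and handling the singularity of $U(t-r,0)$ near $r=t$ via the uniform bound \Ref{odhadU} rather than the derivative estimate, is the key technical manoeuvre; everything else is the classical Picard–Lindelöf scheme carried out pathwise.
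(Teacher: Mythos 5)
Your proposal is correct and follows the same overall scheme as the paper: a pathwise Banach fixed--point argument for the map $y\mapsto U_Y(\cdot,0)x+\int_0^\cdot U_Y(\cdot,r)F(r,y(r))\,\d r$ on $\mathcal{C}([0,T];V)$, based on the pathwise bound $\|U_Y(t,r)\|_{\mathcal{L}(V)}\leq C(\omega)$ obtained from \Ref{SB}, \Ref{odhadU} and the a.s. boundedness of $B^H$ on $[0,T]$, together with the linear growth bound \Ref{nejv.lin}. The two executions differ in detail. For the contraction step the paper shows the map is a strict contraction only for $T$ small (since the Lipschitz constant is $C_B(\omega)C_U\int_0^T\bar L$) and then invokes ``standard methods'' to continue the solution to arbitrary $T$; you instead iterate the basic estimate to get $\|\mathcal{T}^ny_1-\mathcal{T}^ny_2\|\leq \bigl(C(\omega)\int_0^T\bar L\bigr)^n/n!\,\|y_1-y_2\|$, so that some iterate is a contraction on the whole interval at once --- this is valid even for $\bar L\in L^1$ (the induction uses $\int_0^t\bar L(r)\Lambda(r)^n\d r=\Lambda(t)^{n+1}/(n+1)$ with $\Lambda(t)=\int_0^t\bar L$, legitimate since $\Lambda$ is absolutely continuous), and it is arguably cleaner since it removes the continuation step. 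For the continuity of the convolution term, both arguments isolate the $t$--dependence of $S_B(B^H_t-B^H_r)$ by the group splitting $S_B(B^H_t)S_B(-B^H_r)$; the paper then handles the term $\bigl(S_B(B^H_t)-S_B(B^H_s)\bigr)\int_0^s S_B(-B^H_r)U(t-r,0)F(r,y(r))\d r$ via uniform strong convergence on a compact set of integral values, while you propose dominated convergence. Your route works, but to make it rigorous you should split as the paper does (its $K_1/K_2$ decomposition), treating the $U(t-r,0)\to U(s-r,0)$ convergence inside the integral by dominated convergence with the majorant $C(\omega)\bigl(\bar K(r)+\bar L(r)\|y\|_\infty\bigr)$ and then applying the difference $S_B(B^H_t)-S_B(B^H_s)$ either pointwise under the integral (again with dominated convergence) or to the single fixed vector obtained at the limit time; with that spelled out your sketch closes the only point left terse, and no compactness argument is needed.
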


\begin{proof}
Fix $x\in V$ and show that the mapping
$$\big(\mathcal{K}(y)\big)(t)=U_Y(t,0)x+\int_0^t U_Y(t,r)F\big(r,y(r)\big)\d r$$
is continuous from $\mathcal{C}([0,T];V)$ into
$\mathcal{C}([0,T];V)$ and that $\mathcal{K}$ is a contraction
mapping.

\noindent
Take $y\in \mathcal{C}([0,T];V)$ and $t,s\in[0,T]$. Then
\begin{align*}
\big\|&\big(\mathcal{K}(y)\big)(t)-\big(\mathcal{K}(y)\big)(s)\big\|_V\leq\|U_Y(t,0)x-U_Y(s,0)x\|_V\\&+\Big\|\int_0^t
U_Y(t,r)F\big(r,y(r)\big)\d r-\int_0^s
U_Y(s,r)F\big(r,y(r)\big)\d r\Big\|_V=I_1+I_2.
\end{align*}
Note that applying \Ref{SB} and by the continuity of trajectories
of $\{B^H_t,t\in[0,T]\}$
\begin{equation}\label{odhadSB}
\begin{array}{l}
\sup_{t\in[0,T]}\|S_B(B^H_t(\omega))\|_{\mathcal{L}(V)}\leq M_B\exp\{\omega_B \|B^H(\omega)\|_{\mathcal{C}([0,T])}\}\leq C_B(\omega),\\
\sup_{s,t\in[0,T]}\big\|S_B\big(B^H_t(\omega)-B^H_s(\omega)\big)\big\|_{\mathcal{L}(V)}\leq
M_B\exp\{2\omega_B \|B^H(\omega)\|_{\mathcal{C}([0,T])}\leq
C_B(\omega)
\end{array}
\end{equation}
hold for some constant $0<C_B(\omega)<+\infty$ depending on
$\omega\in\Omega$.

\noindent
By the strong continuity of $S_B$ and $U(\,.\,,0)$  on $V$ it
follows
\begin{align*}
I_1&=\|U_Y(t,0)x-U_Y(s,0)x\|_V\\&\leq
\big\|\big(S_B(B^H_t)-S_B(B^H_s)\big)U(t,0)x\big\|_V+
\big\|S_B(B^H_s)\big(U(t,0)-U(s,0)\big)x\big\|_V\\&\leq
\big\|\big(S_B(B^H_t)-S_B(B^H_s)\big)U(t,0)x\big\|_V +
C_B(\omega)\big\|\big(U(t,0)-U(s,0)\big)x\big\|_V\xrightarrow[s \to t]{} 0.
\end{align*}
Now, let $t>s$. Then
\begin{align*}
I_2&=\Big\|\int_0^t U_Y(t,r)F\big(r,y(r)\big)\d r-\int_0^s
U_Y(s,r)F\big(r,y(r)\big)\d r\Big\|_V\\&\leq \Big\|\int_0^s
\big(U_Y(t,r)-U_Y(s,r)\big)F\big(r,y(r)\big)\d
r\Big\|_V+\Big\|\int_s^t U_Y(t,r)F\big(r,y(r)\big)\d
r\Big\|_V=J_1+J_2.
\end{align*}
Using \Ref{odhadSB}, \Ref{odhadU} and \Ref{nejv.lin}
\begin{align*}
J_2&=\Big\|\int_s^t U_Y(t,r)F\big(r,y(r)\big)\d r\Big\|_V\leq
\int_s^t C_U\|S_B(B^H_t-B^H_r)\|_{\mathcal{L}(V)}
\big\|F\big(r,y(r)\big)\big\|_V\d r\\&\leq C_U
C_B(\omega)(1+\|y\|_{\mathcal{C}([0,T];V)})\int_s^t \bar C(r)\d
r\longrightarrow 0
\end{align*}
as $s\rightarrow t-$ or $t\rightarrow s+$.\\
Also
\begin{align*}
J_1&=\Big\|\int_0^s
\big(U_Y(t,r)-U_Y(s,r)\big)F\big(r,y(r)\big)\d r\Big\|_V\\&\leq
\Big\|\int_0^s
\big(S_B(B^H_t-B^H_r)-S_B(B^H_s-B^H_r)\big)U(t-r,0)F\big(r,y(r)\big)\d
r\Big\|_V\\&\qquad+ \Big\|\int_0^s
S_B(B^H_s-B^H_r)\big(U(t-r,0)-U(s-r,0)\big)F\big(r,y(r)\big)\d
r\Big\|_V=K_1+K_2.
\end{align*}
Since for any fixed $0\leq r\leq s$
$$\big\|\big(U(t-r,0)-U(s-r,0)\big)F\big(r,y(r)\big)\big\|_V \longrightarrow 0$$
as $s\rightarrow t-$ or $t\rightarrow s+$ and by \Ref{odhadU}
\begin{align*}
\big\|\big(U(t-r,0)-U(s-r,0)\big)F\big(r,y(r)\big)\big\|_V&\leq
2C_U\big\|F\big(r,y(r)\big)\big\|_V\\&\leq 2C_U
(1+\|y\|_{\mathcal{C}([0,T];V)})\bar C(r)\in L^1([0,T]),
\end{align*}
the convergence
\begin{align*}
K_2&=\Big\|\int_0^s
S_B(B^H_s-B^H_r)\big(U(t-r,0)-U(s-r,0)\big)F\big(r,y(r)\big)\d
r\Big\|_V\\&\leq C_B(\omega) \int_0^s
\big\|\big(U(t-r,0)-U(s-r,0)\big)F\big(r,y(r)\big)\big\|_V\d r
\longrightarrow 0
\end{align*}
is obtained as $s\rightarrow t-$ or $t\rightarrow s+$ by the
Lebesgue dominated convergence theorem. Note that the set
\begin{align*}
K:=\Big\{\bar y\in V;\,\exists\,0\leq s_1\leq t_1\leq T\quad \bar
y=\int_0^{s_1} S_B(-B^H_r)U(t_1-r,0)F\big(r,y(r)\big)\d r\Big\}
\end{align*}
is compact (being a continuous image of a compact set) and
$$\lim_{t\rightarrow s}\ \sup_{z\in K} \big\|\big(S_B(B^H_t)-S_B(B^H_s)\big)z\big\|_V = 0.$$
Therefore
\begin{align*}
K_1&= \Big\|\int_0^s
\big(S_B(B^H_t-B^H_r)-S_B(B^H_s-B^H_r)\big)U(t-r,0)F\big(r,y(r)\big)\d
r\Big\|_V\\&= \Big\|\big(S_B(B^H_t)-S_B(B^H_s)\big) \int_0^s
S_B(-B^H_r)U(t-r,0)F\big(r,y(r)\big)\d r\Big\|_V\\&\leq
\sup_{z\in K} \big\|\big(S_B(B^H_t)-S_B(B^H_s)\big)z\big\|_V
\longrightarrow 0
\end{align*}
as $s\rightarrow t-$ or $t\rightarrow s+$. Thus
$$\big\|\big(\mathcal{K}(y)\big)(t)-\big(\mathcal{K}(y)\big)(s)\big\|_V\longrightarrow 0$$
as $s\rightarrow t-$ or $t\rightarrow s+$ and the function
$t\mapsto\big(\mathcal{K}(y)\big)(t)$ is continuous on the
interval $[0,T]$ for any $y\in\mathcal{C}([0,T];V)$.

\noindent
For any $y_1,y_2\in\mathcal{C}([0,T];V)$, $t\in[0,T]$ and $T>0$
small enough there exists a constant $0<L_T(\omega)<1$ such that
\begin{align*}
\big\|&\big(\mathcal{K}(y_1)\big)(t)-\big(\mathcal{K}(y_2)\big)(t)\big\|_V=\Big\|\int_0^t
U_Y(t,r)\big(F\big(r,y_1(r)\big)-F\big(r,y_2(r)\big)\big)\d
r\Big\|_V\\& \leq C_B(\omega) C_U \int_0^t
\big\|\big(F\big(r,y_1(r)\big)-F\big(r,y_2(r)\big)\big)\big\|_V\d
r\\&\leq C_B(\omega) C_U \|y_1-y_2\|_{\mathcal{C}([0,T];V)}
\int_0^T\bar L(r)\d r\leq L_T(\omega)
\|y_1-y_2\|_{\mathcal{C}([0,T];V)}
\end{align*}
holds so that $\mathcal{K}$ is a contraction mapping. Hence, by
the Banach fixed--point theo\-rem there exists a unique solution
to the equation \Ref{mild.rce} for $T$ small enough. Applying
standard methods a unique continuous solution to \Ref{mild.rce}
for any $T>0$ can be obtained.
\end{proof}

Consider an equation with a nonlinear perturbation of a drift part
\begin{equation}\label{nelin.rceD}
\begin{array}{rcl}
\d X_t&=&AX_t\d t +F(t,X_t)\d t + BX_t\d B^H_t,\\
X_0&=&x\in V.
\end{array}
\end{equation}

\begin{Definice}
A $\big(\mathcal{B}([0,T])\otimes\mathcal{F}\big)$--measurable
process $\{X_t,t\in[0,T]\}$ is a \textbf{weak
solution} to the equation {\rm (\ref{nelin.rceD})} if for any
$y\in D^*$
\begin{align*}
\langle X_t,y\rangle _V&=\langle x,y\rangle _V+\int_0^t \langle
X_r,A^*y\rangle_V\d r+\int_0^t \langle F(r,X_r),y\rangle_V\d
r+\int_0^t \langle X_r,B^*y\rangle _V\d B^H_r\quad \P\!\!\textrm{
--}\,a.s.
\end{align*}
for all $t\in [0,T]$, where the integrals have to be well--defined.
\end{Definice}

\begin{remark}
\begin{itemize}
\item[(i)] The conditions ${\rm (i)}_{\rm F}$ and ${\rm (ii)}_{\rm F}$ imply
\begin{equation}\label{nejv.lin}
\|F(t,x)\|_V\leq \bar C(t)(1+\|x\|_V),\ x\in V,\,t\in[0,T].
\end{equation}
for a function $\bar C\in L^1([0,T])$.
\item[(ii)] In the Wiener case $H=1/2$ the solution to the equation~\Ref{mild.rce} is the so--called mild solution to the equation
$$
\begin{array}{rcl}
\d X_t&=&AX_t\d t +F(t,X_t)\d t + BX_t\d W_t,\\
X_0&=&x\in V.
\end{array}
$$
In this case, S.~Bonaccorsi (\cite{bonaccorsi}) has shown that
the solution to the equation~\Ref{mild.rce} is
also the weak one. This in general is not true for the equation (\ref{nelin.rceD}) as is
shown in the simple counterexample below.
\end{itemize}
\end{remark}

\begin{Priklad}\label{priklad}
Consider a one--dimensional equation
\begin{equation}\label{1-dim.rce}
\d X_t = a X_t\d t + b X_t\d B^H_t,\ X_0 = 1,
\end{equation}
where $a,b\in\R$ are nonzero constants. Note that
the equation~\Ref{1-dim.rce} takes the form (\ref{nelin.rceD}) with
$F(t,x)=ax$, $A=0$, $B=b I$ and $x_0=1$.

\noindent
The solution to the equation~\Ref{1-dim.rce} is given by the formula
$$X_t=\exp\left\{bB^H_t-\frac{1}{2}b^2t^{2H}+at\right\},\ t\in[0,T],$$
and the random evolution system corresponding to the above choice of coefficients is

$$U_Y(t,s)=
S_B(B^H_t-B^H_s)U(t,s)=\exp\left\{b\big(B^H_t-B^H_s\big)-\frac{1}{2}b^2\big(t-s)^{2H}\right\},\
0\leq s\leq t\leq T.$$
\noindent
It is now easy to compute that the solution $\{X_t,t\in[0,T]\}$ DOES NOT satisfy the mild formula

\begin{equation}\label{1-dim.mild}
y(t)= U_Y(t,0) + \int_0^t  U_Y(t,r)F(r,y(r))\d r.
\end{equation}
\noindent
Note that if we define the system $\{\bar U_Y(t,s),0\leq s\leq t\leq T\}$ as
\begin{align*}
\bar U_Y(t,s)&=
S_B(B^H_t-B^H_s)U(t,s)=\exp\left\{b(B^H_t-B^H_s)-\frac{1}{2}b^2\big(t^{2H}-s^{2H}\big)\right\},\
0\leq s\leq t\leq T,
\end{align*}
the above mild formula holds if $U_Y$ is replaced by $\bar U_Y$.
\end{Priklad}

\begin{remark}
Let the assumptions of Theorem~\ref{evol.syst} be satisfied. Then
the system $\{\bar U_Y(t,s),0\leq s\leq t\leq T\}$ defined as
\begin{equation}\label{a_evol.syst}
\bar U_Y(t,s)x= S_B(B^H_t-B^H_s)U(t,s)x,\ x\in V,\,0\leq s\leq
t\leq T,
\end{equation}
is a weak solution to the equation
$$
\begin{array}{rcl}
\d Y_t &=& A(t)Y_t\d t + H\big((t-s)^{2H-1}-t^{2H-1}\big)B^2Y_t\d t + BY_t\d B^H_t,\ t>s,\\
Y_s&=&x.
\end{array}
$$
This result can be obtained in the same way as
Theorem~\ref{evol.syst}. The system $\{\bar U_Y(t,s),0\leq s\leq
t\leq T\}$ defined in Example~\ref{priklad} is a particular case
of~\Ref{a_evol.syst}. Moreover, this system has a~com\-position
property.

It is easy to verify the fact that $\{U_Y(t,s),0\leq s \leq
t\leq T\}$ does not possess the composition property, which means that
the equation~\Ref{ns.rce} does not define a cocycle in the usual
way. On the other hand, in~\cite{bartek} it has been proved (for
the case of stochastic equation with homogeneous right hand side
and bilinear fractional noise) that the cocycle property does
hold in the case when stochastic integration in Stratonovich
sense is considered.
\end{remark}

The natural question is whether there is a chance to obtain a weak solution as the unique solution to the equation~\Ref{mild.rce}. The positive answer gives the next theorem but only under the restrictive assumption on $F$ that it does not depend on the space variable.

\begin{Veta}\label{main.nelin}
Assume that the measurable function $F:[0,T]\rightarrow V$ is affine and that \mbox{$\|F\|_V\in L^2([0,T])$.} Then the unique continuous solution $\{X_t,t\in [0,T]\}$ to the equation
\begin{equation}\label{am_rce}
X^M_t=U_Y(t,0)x+\int_0^t U_Y(t,r)F(r)\d r
\end{equation}
stated in Theorem~\ref{mild.res} is a weak solution to the equation
\begin{equation}\label{a_rce}
\begin{array}{rcl}
\d X_t&=&AX_t\d t +F(t)\d t + BX_t\d B^H_t,\\
X_0&=&x\in V.
\end{array}
\end{equation}
\end{Veta}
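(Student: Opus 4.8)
The plan is to write the mild solution as a sum, $X^M_t = X^{(1)}_t + X^{(2)}_t$ with $X^{(1)}_t = U_Y(t,0)x$ and $X^{(2)}_t = \int_0^t U_Y(t,r)F(r)\,\d r$, and to check that each summand produces the corresponding part of the weak formulation of~\Ref{a_rce}. For $X^{(1)}$ there is nothing to do: Theorem~\ref{evol.syst} applied with initial time $s=0$ says that $\{U_Y(t,0)x,0\le t\le T\}$ is a weak solution of the homogeneous bilinear equation~\Ref{bilin.rce}, i.e. for every $y\in D^*$
$$\langle X^{(1)}_t,y\rangle_V=\langle x,y\rangle_V+\int_0^t\langle X^{(1)}_r,A^*y\rangle_V\,\d r+\int_0^t\langle X^{(1)}_r,B^*y\rangle_V\,\d B^H_r.$$
So it remains to show that for every $y\in D^*$ and $t\in[0,T]$
$$\langle X^{(2)}_t,y\rangle_V=\int_0^t\langle F(r),y\rangle_V\,\d r+\int_0^t\langle X^{(2)}_r,A^*y\rangle_V\,\d r+\int_0^t\langle X^{(2)}_r,B^*y\rangle_V\,\d B^H_r;$$
adding the two identities and using additivity of $\delta_H$ on its domain then yields the weak formulation of~\Ref{a_rce} for $X^M$, which is continuous by Theorem~\ref{mild.res}.

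To obtain the identity for $X^{(2)}$, fix $y\in D^*$ and, for each $r\in[0,T]$, apply Theorem~\ref{evol.syst} with initial time $s=r$ and deterministic initial value $F(r)\in V$: the process $\{U_Y(\sigma,r)F(r),r\le\sigma\le T\}$ is a weak solution of~\Ref{ns.rce}, so
$$\langle U_Y(\sigma,r)F(r),y\rangle_V=\langle F(r),y\rangle_V+\int_r^\sigma\langle U_Y(u,r)F(r),A^*y\rangle_V\,\d u+\int_r^\sigma\langle U_Y(u,r)F(r),B^*y\rangle_V\,\d B^H_u.$$
Integrating this over $r\in(0,t)$, the left side becomes $\langle X^{(2)}_t,y\rangle_V$ and the first term on the right becomes $\int_0^t\langle F(r),y\rangle_V\,\d r$; for the remaining two terms one must interchange the $\d r$-integration with, respectively, the $\d u$-Lebesgue integral and the $\d B^H_u$-Skorokhod integral. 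The drift term is the elementary Fubini theorem, $\int_0^t\!\int_r^t(\cdots)\,\d u\,\d r=\int_0^t\!\int_0^u(\cdots)\,\d r\,\d u=\int_0^t\langle X^{(2)}_u,A^*y\rangle_V\,\d u$, while the stochastic term requires a stochastic Fubini theorem for $\delta_H$, which delivers $\int_0^t\langle X^{(2)}_u,B^*y\rangle_V\,\d B^H_u$.

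The main obstacle is the stochastic Fubini step, together with the attendant verification that the integrands lie in $\mathrm{Dom}\,\delta_H$; this is where the assumption $\|F\|_V\in L^2([0,T])$ is used (rather than merely $L^1$). I would argue as follows. Since $y\in D^*\subset\mathrm{Dom}((B^*)^2)$ by (B1), one may move the group to the test vector, $\langle U_Y(u,r)F(r),B^*y\rangle_V=\langle S_B(B^H_u-B^H_r)U(u-r,0)F(r),B^*y\rangle_V$, and differentiate in the fractional Brownian argument (using $D^H_\theta(B^H_u-B^H_r)=I_{(r,u]}(\theta)$ and the group property) to get, for $0\le r<u\le T$,
$$D^H_\theta\langle U_Y(u,r)F(r),B^*y\rangle_V=I_{(r,u]}(\theta)\,\langle S_B(B^H_u-B^H_r)U(u-r,0)F(r),(B^*)^2y\rangle_V,$$
which is well defined because $y\in\mathrm{Dom}((B^*)^2)$. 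Combining the uniform bound $\|U(u-r,0)\|_{\mathcal L(V)}\le C_U$ from~\Ref{odhadU}, the exponential estimate~\Ref{odhadSB} for $S_B(B^H_u-B^H_r)$, the finiteness of all exponential moments of $\|B^H\|_{\mathcal{C}([0,T])}$, and $\|F\|_V\in L^2([0,T])$, one checks that the kernel $(\omega,u,r)\mapsto\langle U_Y(u,r)F(r),B^*y\rangle_V$ (extended by zero for $r\ge u$) and its Malliavin derivative satisfy the integrability bounds on the right-hand side of~\Ref{odhad.int}, in the Hilbert-space-valued form of Remark~\ref{pozn}, both for fixed $r$ and after integration in $r$. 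This places $\sigma\mapsto\langle X^{(2)}_\sigma,B^*y\rangle_V=\int_0^\sigma\langle U_Y(\sigma,r)F(r),B^*y\rangle_V\,\d r$ in $\mathrm{Dom}\,\delta_H$ and justifies the interchange, completing the proof. (The required joint measurability of $(\sigma,r,\omega)\mapsto U_Y(\sigma,r)F(r)$ is routine from the continuity properties of $S_B$, $U$ and the trajectories of $B^H$.)
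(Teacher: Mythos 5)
Your proposal is correct and takes essentially the same route as the paper: the paper substitutes the mild formula into the weak-form integrals and interchanges orders of integration via the standard and stochastic Fubini theorems (its Lemma~\ref{fub1}, verified through the bound \Ref{odhad.int}, the Malliavin derivative computed using (B1), the Fernique theorem and $\|F\|_V\in L^2([0,T])$ — exactly the checks you sketch), then uses that $U_Y(\cdot,v)F(v)$ is a weak solution of the bilinear equation started at time $v$. You run the same identities in the reverse order (apply the weak-solution property at each $r$, integrate in $r$, swap back), which is only a cosmetic difference.
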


\noindent
The main idea of the proof is to use standard and stochastic Fubini theorem for the Skorokhod integral stated in \cite{leon}, Lemma~2.10, or \cite{nualart}, Exercise~3.2.8.

\begin{Lemma}\label{fubini}
Consider a random field $\{u(t,x),t\in [0,T],x\in G\}$, where $G\subset\R$ is a bounded set, such that
\begin{itemize}
\item[${\rm (i)}_{\rm W}$] $u\in L^2(\Omega\times [0,T]\times G)$,
\item[${\rm (ii)}_{\rm W}$] $u(\,.\,,x)\in {\rm Dom}\,\delta_W$ for a.e. $x\in G$,
\item[${\rm (iii)}_{\rm W}$] $\E\left[\int_G \left(\int_0^T u(t,x)\d W_t\right)^2\d x\right]<+\infty.$
\end{itemize}
Then the process $\left\{\int_G u(t,x)\d x,t\in[0,T]\right\}\in {\rm Dom}\,\delta_W$ and
$$\int_0^T\left(\int_G u(t,x)\d x\right)\d W_t = \int_G \left(\int_0^T u(t,x)\d W_t\right)\d x.$$
\end{Lemma}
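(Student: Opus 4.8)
The plan is to verify directly the duality relation that defines the divergence operator. Set $v(t):=\int_G u(t,x)\,\d x$ for $t\in[0,T]$ and $Z:=\int_G \delta_W\big(u(\,\cdot\,,x)\big)\,\d x$, the latter being the candidate value of $\delta_W(v)$ (and the right-hand side of the asserted identity). First one notes that these objects make sense: by the Cauchy--Schwarz inequality and boundedness of $G$, hypothesis ${\rm (i)}_{\rm W}$ gives $\E\big[\int_0^T v(t)^2\,\d t\big]\le |G|\,\|u\|_{L^2(\Omega\times[0,T]\times G)}^2<+\infty$, so $v\in L^2\big(\Omega;L^2([0,T])\big)$; likewise ${\rm (iii)}_{\rm W}$ yields $\E\big[Z^2\big]\le|G|\,\E\big[\int_G\big(\int_0^T u(t,x)\,\d W_t\big)^2\,\d x\big]<+\infty$, so $Z\in L^2(\Omega)$. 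Here one uses that ${\rm (ii)}_{\rm W}$ and ${\rm (iii)}_{\rm W}$ implicitly provide a version of $(\omega,x)\mapsto\delta_W\big(u(\,\cdot\,,x)\big)(\omega)$ that is jointly measurable on $\Omega\times G$, which is needed in the interchanges below.

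Next I would fix a smooth cylindrical random variable $F$ based on $W$. By the definition of the Malliavin derivative and the classical Fubini theorem,
$$\E\big[\langle D^W F,v\rangle_{L^2([0,T])}\big]=\E\Big[\int_0^T (D^W_t F)\Big(\int_G u(t,x)\,\d x\Big)\d t\Big]=\int_G\E\big[\langle D^W F,u(\,\cdot\,,x)\rangle_{L^2([0,T])}\big]\,\d x,$$
where the interchange of $\d t$, $\d x$ and expectation is legitimate because $(D^W F)\,u$ is absolutely integrable on $\Omega\times[0,T]\times G$: indeed $\E\big[\int_0^T\!\int_G|D^W_t F|\,|u(t,x)|\,\d x\,\d t\big]\le|G|^{1/2}\|D^W F\|_{L^2(\Omega\times[0,T])}\|u\|_{L^2(\Omega\times[0,T]\times G)}<+\infty$ since $F\in\mathbb D^{1,2}_W$. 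For a.e. $x\in G$ one has $u(\,\cdot\,,x)\in{\rm Dom}\,\delta_W$ by ${\rm (ii)}_{\rm W}$, so the duality relationship gives $\E\big[\langle D^W F,u(\,\cdot\,,x)\rangle_{L^2([0,T])}\big]=\E\big[F\,\delta_W\big(u(\,\cdot\,,x)\big)\big]$. Substituting this and applying the classical Fubini theorem once more — now with absolute integrability $\E\big[\int_G|F|\,|\delta_W(u(\,\cdot\,,x))|\,\d x\big]\le|G|^{1/2}\|F\|_{L^2(\Omega)}\big(\E\big[\int_G\delta_W(u(\,\cdot\,,x))^2\,\d x\big]\big)^{1/2}<+\infty$ coming from ${\rm (iii)}_{\rm W}$ — one arrives at
$$\E\big[\langle D^W F,v\rangle_{L^2([0,T])}\big]=\int_G\E\big[F\,\delta_W\big(u(\,\cdot\,,x)\big)\big]\,\d x=\E\big[F\,Z\big].$$

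From this the conclusion is immediate: $\big|\E\big[\langle D^W F,v\rangle_{L^2([0,T])}\big]\big|=|\E[FZ]|\le\|Z\|_{L^2(\Omega)}\|F\|_{L^2(\Omega)}$ for every such $F$, with $\|Z\|_{L^2(\Omega)}<+\infty$, so $v\in{\rm Dom}\,\delta_W$ by the defining criterion of the divergence operator. Since $\delta_W$ is the adjoint of $D^W$, the relation $\E\big[\langle D^W F,v\rangle_{L^2([0,T])}\big]=\E[FZ]$, valid on the dense set of smooth cylindrical functionals of $W$ (equivalently on all of $\mathbb D^{1,2}_W$), identifies $\delta_W(v)=Z$ uniquely, which is exactly the claimed stochastic Fubini identity $\int_0^T\big(\int_G u(t,x)\,\d x\big)\d W_t=\int_G\big(\int_0^T u(t,x)\,\d W_t\big)\d x$. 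The one genuinely delicate point is the joint measurability of $(\omega,x)\mapsto\delta_W(u(\,\cdot\,,x))$ that makes the second Fubini step meaningful; once that is granted (it is implicit in ${\rm (ii)}_{\rm W}$--${\rm (iii)}_{\rm W}$ and can be arranged by a standard approximation of $u$ by elementary processes together with the closedness of $\delta_W$), everything else is a routine double application of the classical Fubini theorem combined with the duality that defines $\delta_W$.
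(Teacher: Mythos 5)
Your argument is correct: the double application of the classical Fubini theorem combined with the duality defining $\delta_W$, the bound $\big|\E[\langle D^W F,v\rangle_{L^2([0,T])}]\big|\le\|Z\|_{L^2(\Omega)}\|F\|_{L^2(\Omega)}$ on smooth cylindrical $F$, and the identification $\delta_W(v)=Z$ by density is exactly the standard proof of this stochastic Fubini lemma, and you rightly flag the joint measurability of $(\omega,x)\mapsto\delta_W\big(u(\,\cdot\,,x)\big)$ as the only delicate point (it is implicit in ${\rm (ii)}_{\rm W}$--${\rm (iii)}_{\rm W}$). The paper itself gives no proof but refers to Le\'on (Lemma~2.10) and Nualart (Exercise~3.2.8), where the argument is the same duality computation you carried out, so your proposal is essentially the proof the paper relies on.
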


\noindent
Due to the relationship between Skorokhod integral with respect to Wiener process and fractional Brownian motion (see \Ref{K_int} or \cite{nualart1} for more detailes) ${\rm (ii)}_{\rm W}$, ${\rm (iii)}_{\rm W}$ are equi\-va\-lent to
\begin{itemize}
\item[${\rm (ii)}_{\rm H}$]  $u_H(\,.\,,x)\in {\rm Dom}\,\delta_H$ for a.e. $x\in G$,
\item[${\rm (iii)}_{\rm H}$] $\E\left[\int_G \left(\int_0^T u_H(t,x)\d B^H_t\right)^2\d x\right]<+\infty,$
\end{itemize}
respectively, where $u_H(t,x)=\big(\mathcal{K}^*_H\big)^{-1}\big(u(\,.\,,x)\big)(t),t\in[0,T]$. The conclusion of Lemma~\ref{fubini} can be reformulated in the following way. The process $\left\{\int_G u_H(t,x)\d x,t\in[0,T]\right\}\in {\rm Dom}\,\delta_H$ and
$$\int_0^T\left(\int_G u_H(t,x)\d x\right)\d B^H_t = \int_G \left(\int_0^T u_H(t,x)\d B^H_t\right)\d x.$$

\noindent
The proof of Theorem~\Ref{main.nelin} is based on the following lemma.

\begin{Lemma}\label{fub1}
The equalities
\begin{equation}\label{zam1}
\int_0^t\int_0^r \big\langle U_Y(r,v)F(v),A^*\zeta\big\rangle_V\d v\d r= \int_0^t\int_v^t \big\langle U_Y(r,v)F(v),A^*\zeta\big\rangle_V\d r\d v
\end{equation}
and
\begin{align*}
\int_0^t \!\int_0^r \!\big\langle U_Y(r,v)F(v),B^*\zeta\big\rangle _V\d v\d B^H_r=\int_0^t \!\int_v^t \!\big\langle U_Y(r,v)F(v),B^*\zeta\big\rangle _V\d B^H_r\!\d v
\end{align*}
hold $\quad
\P\!\textrm{--}\, a.s.$ for any $t\in[0,T]$ and fixed $\zeta\in D^*$.
\end{Lemma}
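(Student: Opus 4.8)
The plan is to establish the two identities separately: the first by the classical Fubini theorem, the second by the stochastic Fubini theorem of Lemma~\ref{fubini} in its $B^H$-form. Throughout I would use that, by \Ref{odhadU} and \Ref{odhadSB}, $\|U_Y(r,v)\|_{\mathcal L(V)}\le C_B(\omega)C_U$ holds $\P$--a.s., that $\E[C_B^2]<+\infty$ by Fernique's theorem applied to $\|B^H\|_{\mathcal C([0,T])}$, and that $\|F\|_V\in L^2([0,T])\subset L^1([0,T])$ for the affine $F$ of Theorem~\ref{main.nelin}.

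For \Ref{zam1} one observes that both sides are iterated integrals of $(r,v)\mapsto\langle U_Y(r,v)F(v),A^*\zeta\rangle_V$ over the triangle $\{0\le v\le r\le t\}$, on which this function is dominated $\P$--a.s. by $C_B(\omega)C_U\|A^*\zeta\|_V\|F(v)\|_V$, which is integrable with respect to $\mathrm{Leb}\otimes\mathrm{Leb}$ on the triangle; hence classical Fubini applies pathwise. For the second identity I would apply Lemma~\ref{fubini} with spatial set $G=[0,t]$ (the $v$-variable) and random field
$$u_H(r,v):=I_{[v,t]}(r)\,\langle U_Y(r,v)F(v),B^*\zeta\rangle_V,\qquad r\in[0,T],\ v\in[0,t],$$
so that $\int_G u_H(r,v)\,\d v=I_{[0,t]}(r)\int_0^r\langle U_Y(r,v)F(v),B^*\zeta\rangle_V\,\d v$ and $\int_0^T u_H(r,v)\,\d B^H_r=\int_v^t\langle U_Y(r,v)F(v),B^*\zeta\rangle_V\,\d B^H_r$; the conclusion of Lemma~\ref{fubini} is then exactly the asserted equality, and it also guarantees that both sides are well defined.

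It remains to verify the three hypotheses for this $u_H$. The $L^2$-integrability follows from $\|U_Y(r,v)\|_{\mathcal L(V)}\le C_B(\omega)C_U$, $\E[C_B^2]<+\infty$ and $\|F\|_V\in L^2([0,T])$, the passage from $u_H$ to $u=\mathcal K_H^* u_H$ being controlled by \Ref{vnor_K_H}. For the Skorokhod integrability of $u_H(\cdot,v)$ and the bound on $\delta_H(u_H(\cdot,v))$, note that for fixed $r>v$ the variable $u_H(r,v)$ is a deterministic $\mathcal C^1$-function (of at most exponential growth, with derivative of exponential growth) of the single Gaussian $B^H(I_{(v,r]})=B^H_r-B^H_v$; the convenient way to compute its Malliavin derivative is to transfer $B$ to the test element by duality, writing $u_H(r,v)=\langle U(r-v,0)F(v),\,(S_B(B^H_r-B^H_v))^*B^*\zeta\rangle_V$ and using that $(S_B(u))^*$ is the $C_0$-group generated by $B^*$ together with $\zeta\in D^*\subset{\rm Dom}((B^*)^2)$ from (B1). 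This gives
$$D^H u_H(r,v)=\big\langle U(r-v,0)F(v),\,(S_B(B^H_r-B^H_v))^*(B^*)^2\zeta\big\rangle_V\,I_{(v,r]}\in\mathcal H,$$
whose $\mathcal H$-norm is at most $C_U C_B(\omega)\|F(v)\|_V\|(B^*)^2\zeta\|_V\,(r-v)^H$ — crucially with no singularity as $r\downarrow v$. Since $L^2([0,T])\hookrightarrow L^{1/H}([0,T])$ and $L^2([0,T]^2)\hookrightarrow L^{1/H}([0,T]^2)$ on bounded domains (as $1/H<2$), the inclusions \Ref{int_inkluse} yield $u_H(\cdot,v)\in\mathbb D^{1,2}_H(|\mathcal H|)\subset{\rm Dom}\,\delta_H$, and \Ref{odhad.int} gives $\E[\delta_H^2(u_H(\cdot,v))]\le C(T,\zeta)\,\E[C_B^2]\,\|F(v)\|_V^2$, which is integrable in $v\in[0,t]$; this is the remaining hypothesis.

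The main obstacle is precisely this middle step: $u_H(r,v)$ is a genuinely nonlinear functional of $B^H$ through the group $S_B$, the map $u\mapsto S_B(u)\psi$ is a priori only continuous unless $\psi$ lies in a suitable domain, and $S_B$ grows exponentially rather than polynomially. The structural hypotheses are what rescue the argument: (B3) guarantees ${\rm Im}\,U(r-v,0)\subset{\rm Dom}(B^2)$, and (B1) allows the Malliavin differentiation to be carried out after moving $B$ onto $\zeta$ by duality, so that no negative power of $r-v$ ever appears; Fernique's theorem then upgrades the pathwise bounds to the $L^2(\Omega)$-estimates needed to apply \Ref{odhad.int} and Lemma~\ref{fubini}.
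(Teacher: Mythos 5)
Your proposal is correct and follows essentially the same route as the paper: the classical Fubini theorem handles \Ref{zam1} via the pathwise bound $\|U_Y(r,v)\|_{\mathcal{L}(V)}\leq C_B(\omega)C_U$, and the stochastic part is obtained from Lemma~\ref{fubini} with $u_H(r,v)=\langle U_Y(r,v)F(v),B^*\zeta\rangle_V$ on the triangle, checking ${\rm (i)}_{\rm W}$, ${\rm (ii)}_{\rm H}$, ${\rm (iii)}_{\rm H}$ by the same Malliavin-derivative computation $D^H u_H(r,v)=\langle U_Y(r,v)F(v),(B^*)^2\zeta\rangle_V I_{(v,r]}$ (moving $B$ onto $\zeta$ via (B1)), the Fernique-type moment bound, and the estimate \Ref{odhad.int} through the inclusion \Ref{int_inkluse}. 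The only differences are cosmetic (explicit indicator notation and the $(r-v)^H$ bound on the $\mathcal H$-norm), so nothing further is needed.
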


\begin{proof}
It is necessary to verify the assumptions of standard and stochastic Fubini theorem.\\
Notice that the Fernique theorem (see e.g. \cite{fernique}) yields that there exists a~random variable $C_{B^H}(\omega)$ such that $C_{B^H}\in  L^q(\Omega)$ for any $q\in[1,+\infty)$ and
\begin{equation}\label{odhad_fernique}
M_B\exp\{l\omega_B \|B^H(\omega)\|_{\mathcal{C}([0,T])}\}\leq C_{B^H}(\omega),\ \omega\in\Omega,l=1,2.
\end{equation}
Since by \Ref{odhad_fernique} and \Ref{odhadU}
\begin{align*}
\int_0^t\int_0^r \big|\big\langle U_Y(r,v)F(v),A^*\zeta\big\rangle_V\big|\d v\d r&\leq \int_0^T\int_0^T C_{B^H}(\omega)C_U\|F(v)\|_V\|A^*\zeta\|_V\d v\d r\\&\leq K(\omega)\int_0^T \|F(v)\|_V\d v<+\infty
\end{align*}
for a.e. $\omega\in\Omega$, \Ref{zam1} follows by the standard Fubini theorem.\\
Denote
\begin{align*}
u_H(r,s)&=\big\langle U_Y(r,s)F(s),B^*\zeta\big\rangle _V,\ 0\leq s\leq r\leq t,\\
u(r,s)&=\big(\mathcal{K}^*_H u_H(\,.\,,s)\big)(r),\ 0\leq s\leq r\leq t,
\end{align*}
and verify that ${\rm (i)}_{\rm W},{\rm (ii)}_{\rm H}$ and ${\rm (iii)}_{\rm H}$ hold for the corresponding processes. First show that $u\in L^2([0,t]^2\times\Omega)$. Using \Ref{vnor_K_H}
\begin{align*}
\E\left[\int_0^t\int_0^t u^2(r,s)\d r\d s\right]&\leq K_e \E\left[\int_0^t\int_0^t u_H^2(r,s)\d r\d s\right]\\&\leq K_e\E\left[\int_0^t\int_0^t \big(C_{B^H}(\omega)C_U\|F(s)\|_V\|B^*\zeta\|_V\big)^2\d r\d s\right]<+\infty,
\end{align*}
and ${\rm (i)}_{\rm W}$ follows. To prove ${\rm (ii)}_{\rm H}$ it sufficies to show (in the view of \Ref{int_inkluse}) that $u_H(\,.\,,s)\in \mathbb{D}^{1,2}_H(|\mathcal{H}|)$ for a.e. $s\in [0,t]$ which is true whenever
\begin{equation}\label{odhad_uH}
\max\left\{\sup_{r\in [0,t]}\E\big[u_H^2(r,s)\big], \sup_{r\in [0,t]}\sup_{v\in [0,t]}\E\big[(D_v^H u_H(r,s))^2\big]\right\}<+\infty
\end{equation}
for a.e. $s\in [0,t]$. Since
$$D_v^H u_H(r,s)=\big\langle U_Y(r,s)F(s),(B^*)^2\zeta\big\rangle _V I_{(s,r]}(v)$$
the inequalities
\begin{align*}
\sup_{r\in [0,t]}\sup_{v\in [0,t]}&\E\big[(D_v^H u_H(r,s))^2\big]\\&\leq \sup_{r\in [0,t]} \E\big[\big(C_{B^H}(\omega)C_U\|F(s)\|_V\|(B^*)^2\zeta\|_V\big)^2\big] = K\|F(s)\|^2_V<+\infty
\end{align*}
and
$$\sup_{r\in [0,t]}\E\big[u_H^2(r,s)\big]\leq \E\big[(C_{B^H}(\omega)C_U\|F(s)\|_V\|B^*\zeta\|_V)^2\big]\leq K\|F(s)\|^2_V<+\infty$$
hold for a.e. $s\in [0,t]$ which completes the proof of~\Ref{odhad_uH}.\\
Finally, applying the estimate on the Skorokhod integral \Ref{odhad.int} and the previous part of the proof of~\Ref{odhad_uH}
\begin{align*}
\E&\left[\int_0^t\Big(\int_0^t u_H(r,s)\d B^H_r\Big)^2\d s\right]=\int_0^t\E\left[\Big(\int_0^t u_H(r,s)\d B^H_r\Big)^2\right]\d s\\&\quad\leq C_{H,2} \int_0^t \Big(\E\big[\|u_H(\,.\,,s)\|_{L^2([0,t])}^2\big]+ \E\big[\|D^H u_H(\,.\,,s)\|_{L^2([0,t]^2)}^2\big]\Big)\d s\\&\quad\leq C_{H,2} \int_0^t (t+t^2) K\|F(s)\|^2_V\d s<+\infty
\end{align*}
holds and ${\rm (iii)}_{\rm H}$ follows.
\end{proof}

\begin{proof}[Proof of Theorem~\ref{main.nelin}] $\quad$ Fix $\zeta\in D^*$. Since $\{X_t,t\in [0,T]\}$ satisfies~\Ref{am_rce} and $\{U_Y(t,s)x,s\leq t\leq T\}$ is a~weak solution to the equation \Ref{bilin.rce}
\begin{align*}
\int_0^t& \langle X_r,A^*\zeta\rangle_V\d r+\int_0^t \langle X_r,B^*\zeta\rangle _V\d B^H_r= \int_0^t \big\langle U_Y(r,0)x,A^*\zeta\big\rangle_V\d r\\&\qquad+\int_0^t\int_0^r \big\langle U_Y(r,v)F(v),A^*\zeta\big\rangle_V\d v\d r+ \int_0^t \big\langle U_Y(r,0)x,B^*\zeta\big\rangle _V\d B^H_r\\&\qquad+ \int_0^t \int_0^r \big\langle U_Y(r,v)F(v),B^*\zeta\big\rangle _V\d v\d B^H_r\\&= \langle U_Y(t,0)x,\zeta \rangle_V-\langle x,\zeta \rangle_V+ \int_0^t\int_v^t \big\langle U_Y(r,v)F(v),A^*\zeta\big\rangle_V\d r\d v \\&\qquad+\int_0^t \int_v^t \big\langle U_Y(r,v)F(v),B^*\zeta\big\rangle _V\d B^H_r\d v\quad \P\!\textrm{--}\,{\rm a.s.}
\end{align*}
holds for any $t\in[0,T]$, where in the last equality Lemma~\ref{fub1} is used.\\
Applying again that $\{U_Y(t,s)x,s\leq t\leq T\}$ is a~weak solution to the equation \Ref{bilin.rce}
\begin{align*}
\int_0^t& \langle X_r,A^*\zeta\rangle_V\d r+\int_0^t \langle X_r,B^*\zeta\rangle _V\d B^H_r\\&= \langle U_Y(t,0)x,\zeta \rangle_V-\langle x,\zeta \rangle_V+ \int_0^t \big\langle U_Y(t,v)F(v),\zeta\big\rangle_V\d v-\int_0^t \langle F(v),\zeta\rangle _V\d v\\&= \langle X_t,\zeta \rangle_V-\langle x,\zeta \rangle_V-\int_0^t \langle F(v),\zeta\rangle _V\d v\quad \P\!\textrm{--}\,{\rm a.s.}
\end{align*}
is obtained for any $t\in [0,T]$ and the conclusion follows.
\end{proof}

\begin{remark}
In  view of Example~\ref{priklad}, one can ask whether the solution to the
equation~\Ref{mild.rce} is a weak one to some equation.
A partial answer is given by the subsequent Theorem the proof of which is similar to the one of Theorem \ref{main.nelin} (but more technical) and is omitted.

\begin{Veta}\label{main1.nelin}
Let the assumptions of Theorem~\ref{mild.res} hold and
$\{X_t,t\in[0,T]\}$ be the solution to the equation~{\rm
\Ref{mild.rce}} such that there exists a constant $C_X<+\infty$
\begin{equation}\label{omez_DF}
\max\left\{\sup_{t\in [0,T]}\E\|X_t\|^4_V, \sup_{t\in
[0,T]}\sup_{v\in [0,T]}\E\|D_v^H X_t\|^4_V\right\}\leq C_X.
\end{equation}
In addition, let $F$ be Fr\'{e}chet differentiable with respect to
the space variable for any time $t\in [0,T]$. Suppose that there
exists a function $C\in L^4([0,T])$ such that
\begin{equation}\label{omez_F}
\max\{\|F(t,x)\|_V,\|F'_x(t,x)\|\}\leq C(t),\ t\in [0,T],
\end{equation}
holds. Then $\{X_t,t\in[0,T]\}$ is a solution to the integral
equation
\begin{align*}
X_t&=x+ \int_0^t AX_r\d r +\int_0^t F(r,X_r)\d r + \int_0^t BX_r\d B^H_r\\
&\qquad+\int_0^t\alpha_H\int_0^T\int_r^t
|v-w|^{2H-2}BU_Y(v,r)F'_x(r,X_r)D^H_w X_r\d v\d w\d r
\end{align*}
in a weak sense, i.e. for any $y\in D^*$
\begin{align*}
\langle X_t,y\rangle_V &=\langle x,y\rangle_V + \int_0^t \langle
X_r,A^*y\rangle_V\d r + \int_0^t \langle F(r,X_r),y \rangle_V \d
r +\int_0^t \langle X_r,B^*y\rangle _V\d B^H_r \\&\qquad+\int_0^t
\alpha_H\int_0^T\int_r^t|v-w|^{2H-2}\big\langle
U_Y(v,r)F'_x(r,X_r)D^H_w X_r,B^*y\big\rangle_V\d v\d w\d r \quad
\P\!\textrm{--}\, a.s.
\end{align*}
for all $t\in [0,T]$.
\end{Veta}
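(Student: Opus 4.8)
The plan is to test the mild equation~\Ref{mild.rce} against an arbitrary $y\in D^*$ and to rewrite every resulting term, following the scheme of the proof of Theorem~\ref{main.nelin}; the only genuinely new feature is that the ``initial value'' $F(r,X_r)$ propagated by $U_Y(\cdot,r)$ is now a Malliavin differentiable $V$--valued random variable, and carrying it through the Skorokhod integral is precisely what produces the extra term.

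\emph{Step 1: a weak formula with a random initial value.} The core of the argument is the auxiliary identity asserting that, for every $\xi\in\mathbb{D}^{1,2}_H(V)$, every $y\in D^*$ and all $0\le r\le t\le T$,
\begin{align*}
\langle U_Y(t,r)\xi,y\rangle_V&=\langle\xi,y\rangle_V+\int_r^t\langle U_Y(s,r)\xi,A^*y\rangle_V\,\d s+\int_r^t\langle U_Y(s,r)\xi,B^*y\rangle_V\,\d B^H_s\\
&\qquad+\alpha_H\int_0^T\!\int_r^t\langle U_Y(s,r)D^H_w\xi,B^*y\rangle_V\,|s-w|^{2H-2}\,\d s\,\d w.
\end{align*}
I would first prove it for $\xi=\sum_{j=1}^m\eta_j\xi^{(j)}$ with $\eta_j\in\mathcal S$ and $\xi^{(j)}\in V$ deterministic. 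For such $\xi$ one has $\langle U_Y(s,r)\xi,B^*y\rangle_V=\sum_j\eta_j\langle U_Y(s,r)\xi^{(j)},B^*y\rangle_V$, so the integration--by--parts (product) rule $\delta_H(\eta u)=\eta\,\delta_H(u)-\langle D^H\eta,u\rangle_{\mathcal H}$, combined with the fact (Theorem~\ref{evol.syst}) that $s\mapsto U_Y(s,r)\xi^{(j)}$ is a weak solution of~\Ref{bilin.rce} started at time $r$ from the deterministic value $\xi^{(j)}$, expresses $\int_r^t\langle U_Y(s,r)\xi,B^*y\rangle_V\,\d B^H_s$ as the first three right--hand terms above minus $\sum_j\langle D^H\eta_j,\langle U_Y(\cdot,r)\xi^{(j)},B^*y\rangle_V I_{(r,t]}\rangle_{\mathcal H}$; rewriting the scalar inner product in $\mathcal H$ through $\langle f,g\rangle_{\mathcal H}=\alpha_H\int_0^T\int_0^T f(s)g(w)|s-w|^{2H-2}\,\d s\,\d w$ (valid for $H>1/2$) and using $\sum_j(D^H_w\eta_j)\xi^{(j)}=D^H_w\xi$ yields exactly the displayed formula. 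Then I would pass to the limit in $\xi$: the estimate~\Ref{odhad.int} on the Skorokhod integral, the bounds~\Ref{odhadU}, \Ref{odhadSB} and the Fernique integrability~\Ref{odhad_fernique} make both sides continuous in the $\mathbb{D}^{1,2}_H(V)$--norm, and the combinations $\sum_j\eta_j\xi^{(j)}$ are dense there. Finally I would apply the formula with $\xi=F(r,X_r)$: by Fr\'echet differentiability of $F$ and the chain rule $D^H_wF(r,X_r)=F'_x(r,X_r)D^H_wX_r$, and the assumptions~\Ref{omez_DF}, \Ref{omez_F} ensure $F(r,X_r)\in\mathbb{D}^{1,2}_H(V)$ for a.e.\ $r$.

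\emph{Step 2: assembling the weak equation.} Testing~\Ref{mild.rce} against $y$ gives $\langle X_t,y\rangle_V=\langle U_Y(t,0)x,y\rangle_V+\int_0^t\langle U_Y(t,r)F(r,X_r),y\rangle_V\,\d r$. I would substitute the Step~1 formula (with $\xi=F(r,X_r)$) into the integrand and interchange the order of integration: the $\d s\,\d r$ integral by the ordinary Fubini theorem, the $\d B^H_s\,\d r$ integral by the stochastic Fubini theorem of Lemma~\ref{fubini} and its fractional reformulation, while the $\d s\,\d w\,\d r$ term carries no stochastic integral and already matches the statement after relabelling $s\rightsquigarrow v$. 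Since $\int_0^s U_Y(s,r)F(r,X_r)\,\d r=X_s-U_Y(s,0)x$ by~\Ref{mild.rce}, the drift and stochastic terms collapse to $\int_0^t\langle X_s-U_Y(s,0)x,A^*y\rangle_V\,\d s$ and $\int_0^t\langle X_s-U_Y(s,0)x,B^*y\rangle_V\,\d B^H_s$, and the term $\int_0^t\langle F(r,X_r),y\rangle_V\,\d r$ appears. Using once more that $U_Y(\cdot,0)x$ is a weak solution of~\Ref{bilin.rce} to expand $\langle U_Y(t,0)x,y\rangle_V$, the two occurrences of $U_Y(\cdot,0)x$ cancel, and the asserted weak identity follows.

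\emph{Main obstacle.} The delicate point is Step~1: justifying the weak--solution formula for a random, anticipating initial datum — in particular the emergence of the correction term through the product rule — and carrying out the closure in $\mathbb{D}^{1,2}_H(V)$. Closely related, and the reason the fourth--order moment hypotheses~\Ref{omez_DF} and $C\in L^4([0,T])$ in~\Ref{omez_F} are imposed rather than second--order ones, is the verification of the hypotheses of the stochastic Fubini theorem in Step~2: applying~\Ref{odhad.int} to $u_H(s,r)=\langle U_Y(s,r)F(r,X_r),B^*y\rangle_V$ generates, through $D^H_wu_H$, a contribution $\|F'_x(r,X_r)D^H_wX_r\|_V^2$ whose expectation has to be controlled by the Cauchy--Schwarz inequality as $\big(\E\|F'_x(r,X_r)\|^4\big)^{1/2}\big(\E\|D^H_wX_r\|_V^4\big)^{1/2}$ before integration in $r$; this, together with the Fernique integrability of $C_{B^H}$, is exactly what \Ref{omez_DF}--\Ref{omez_F} supply. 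The remaining manipulations are routine, running parallel to the proof of Theorem~\ref{main.nelin}.
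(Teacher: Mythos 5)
Your proposal is essentially the route the paper has in mind: the paper omits the proof, saying only that it parallels Theorem~\ref{main.nelin}, and your argument is exactly that parallel — the weak--solution property of $U_Y(\cdot,r)$ from Theorem~\ref{evol.syst}, the standard and stochastic Fubini theorems of Lemma~\ref{fubini}, with the new trace term correctly produced (with the right sign and kernel) by the Skorokhod product rule $\delta_H(\eta u)=\eta\,\delta_H(u)-\langle D^H\eta,u\rangle_{\mathcal H}$ together with the chain rule $D^H_wF(r,X_r)=F'_x(r,X_r)D^H_wX_r$. One small repair: since $U_Y$ is only exponentially (Fernique) integrable rather than bounded, the closure in your Step~1 should be performed in a $\mathbb{D}^{1,4}_H(V)$--type norm (estimating \Ref{odhad.int} via H\"older against $C_{B^H}$), not in $\mathbb{D}^{1,2}_H(V)$ as written — which is exactly the strength of integrability that the hypotheses \Ref{omez_DF} and \Ref{omez_F} supply, so the argument goes through unchanged.
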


\begin{remark}
The condition \Ref{omez_DF} implies that $X\in
\mathbb{D}^{1,4}_H(|\mathcal{H}|)$.
\end{remark}

\end{remark}

\begin{Priklad}
Consider the stochastic parabolic equation of the second order
with the additional affine term in a drift part
\begin{align}\label{PDE}
\frac{\partial u}{\partial t}(t,x)&=\big(Lu(t,\,.\,)\big)(x)+f(t,x)+bu(t,x)\frac{\d B^H}{\d t},\\
u(0,x)&=x_0(x),\ x\in \mathcal{O},\nonumber\\
u(t,x)&=0,\ (t,x)\in [0,T]\times \partial\mathcal{O},\nonumber
\end{align}
where $\mathcal{O}\subset \R^d$ is a bounded domain with the
boundary of class $\mathcal{C}^2$, $b\in\R\setminus \{0\}$ and
$$\big(Lu(t,\,.\,)\big)(x)=a_0(x)u(t,x)+\sum_{i=1}^d a_i(x)\frac{\partial u}{\partial x_i}(t,x)+\sum_{i,j=1}^d a_{ij}(x)\frac{\partial^2 u}{\partial x_i\partial x_j}(t,x)$$
is a strongly elliptic operator on $\mathcal{O}$. 

\noindent
Suppose that the functions
$a_0,a_i,a_{ij}\in\mathcal{C}^{\infty}(\bar{\mathcal{O}})$ for
$i,j=1,\ldots, d$. Let $V=L^2(\mathcal{O})$. Assume that the
mapping $F:[0,T]\rightarrow V;\, F(t):=f(t,\,.\,),$ satisfies
$F\in L^2\big([0,T];V\big)$.

\noindent
Equation~\Ref{PDE} can be rewritten in the form \Ref{a_rce}, where
$$\big(A u(t,\,.\,)\big)(x)=\big(Lu(t,\,.\,)\big)(x),$$
with ${\rm Dom}(A)=D=H^2(\mathcal{O})\cap H^1_0(\mathcal{O})$ and
$B=b I\in\mathcal{L}(V)$. The adjoint operator $A^*$ has the same
form as the operator $A$ (possibly, with different coefficients),
hence ${\rm Dom}(A^*)=D$. In this case the
assumptions of Theorem~\ref{main.nelin} (including those  of
Theorems~\ref{evol.syst} and~\ref{mild.res}) are satisfied,
therefore the process $\{X_t,t\in[0,T]\}$ defined as
$$X_t=U_Y(t,0)x_0+\int_0^t U_Y(t,r)F(r)\d r$$
is a weak solution to the equation~\Ref{PDE}. Note that the
process $\{U_Y(t,s),0\leq s\leq t\leq T\}$ defined in
Theorem~\ref{evol.syst} has the form
$$U_Y(t,s)=\exp\Big\{b(B^H_t-B^H_s)-\frac{1}{2}b^2(t-s)^{2H}\Big\}S_L(t-s),\ 0\leq s\leq t\leq T,$$
where $\{S_L(t),t\in [0,T]\}$ is the strongly continuous
semigroup on $V$ generated by operator $A$.
\end{Priklad}

Theorem~\ref{main.nelin} may serve as a useful tool for analysis
of a behaviour of the weak solutions to~\Ref{a_rce}. As an example
a simple result on large--time behaviour of the solution to the
equation
\begin{equation}\label{chov.rce}
\begin{array}{rcl}
\d X_t&=&\big(AX_t + F(t)\big)\d t + b X_t\d B^H_t,\ t>0,\\
X_0&=&x,
\end{array}
\end{equation}
is provided, where $A:{\rm Dom}(A)\subset V\rightarrow V$ is the
generator of a strongly continuous semigroup $\{S_A(t),t\geq 0\}$
and $b\in\R\setminus\{0\}$.

It is easily seen that
$$U_Y(t,s)=\exp\Big\{b(B^H_t-B^H_s)-\frac{1}{2}b^2(t-s)^{2H}\Big\}S_A(t-s),\ 0\leq s\leq t<+\infty,$$
and since there exist some constants $M>0,\omega\in\R$, such that
$$\|S_A(t)\|_{\mathcal{L}(V)}\leq M {\rm e}^{\omega t},\ t\geq 0,$$
the inequality
\begin{equation}\label{evol.odhad}
\|U_Y(t,s)\|_{\mathcal{L}(V)}\leq M
\exp\Big\{b(B^H_t-B^H_s)-\frac{1}{2}b^2(t-s)^{2H}+\omega(t-s)\Big\},\
0\leq s\leq t<+\infty,
\end{equation}
is obtained.
\begin{Tvrzeni}
Assume that $F\in L^2\big([0,T];V\big)$. Then the solution
$\{X_t,t\geq 0\}$ to the equation~{\rm \Ref{chov.rce}} satisfies
$$\|X_t\|_V\leq y(t),\ t\geq 0,\quad
\P\!\textrm{--}\, a.s.,$$
where $y$ is a solution to one--dimensional equation
\begin{equation}\label{a_mild.rce}
\begin{array}{rcl}
\d y(t)&=&\big(\omega y(t) + \|F(t)\|_V\big)\d t + b y(t)\d B^H_t,\ t>0,\\
y(0)&=&M\|x\|_V.
\end{array}
\end{equation}
\end{Tvrzeni}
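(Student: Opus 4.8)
The plan is to express $X$ through its mild representation, bound $\|U_Y(t,r)\|_{\mathcal{L}(V)}$ via \Ref{evol.odhad}, and recognise the resulting scalar majorant as the solution of the one--dimensional equation \Ref{a_mild.rce}. Because of the affine structure this bound comes out as a closed formula, so the difficulty stressed in the introduction — that the Gronwall lemma is not available for fractional noise — does not arise here; no differential inequality has to be inverted.

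First I would fix $T>0$ and observe that, since $F$ depends on time only and $\|F\|_V\in L^2([0,T])\subset L^1([0,T])$, Theorem~\ref{mild.res} and Theorem~\ref{main.nelin} apply to \Ref{chov.rce} and give, $\P$-a.s.,
$$X_t=U_Y(t,0)x+\int_0^t U_Y(t,r)F(r)\,\d r,\qquad t\in[0,T].$$
Passing to $V$-norms and using the triangle inequality for the Bochner integral together with \Ref{evol.odhad} and $B^H_0=0$, I would obtain $\P$-a.s., for all $t\in[0,T]$,
$$\|X_t\|_V\le M\exp\Big\{bB^H_t-\tfrac12 b^2t^{2H}+\omega t\Big\}\|x\|_V+\int_0^t M\exp\Big\{b\big(B^H_t-B^H_r\big)-\tfrac12 b^2(t-r)^{2H}+\omega(t-r)\Big\}\|F(r)\|_V\,\d r.$$

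The next step is to identify the right--hand side with $y(t)$. Equation \Ref{a_mild.rce} is the scalar ($V=\R$) case of \Ref{a_rce} with $A=\omega I$, $B=bI$, initial value $M\|x\|_V$ and an $L^2([0,T])$ forcing; its associated scalar evolution system is $U_Y(t,s)=\exp\{b(B^H_t-B^H_s)-\frac12 b^2(t-s)^{2H}+\omega(t-s)\}$, built from the group $\{e^{bu}\}$ and the evolution system of $\{\omega-Hb^2t^{2H-1}\}$, so by Theorem~\ref{main.nelin} (applied in dimension one) the solution $y$ of \Ref{a_mild.rce} is the corresponding mild one. Matching this with the displayed bound yields $\|X_t\|_V\le y(t)$ $\P$-a.s. for each fixed $t\in[0,T]$. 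To conclude, I would note that $t\mapsto X_t$ (by Theorem~\ref{mild.res}) and $t\mapsto y(t)$ have $\P$-a.s. continuous trajectories, so the inequality — valid a.s. at each rational time — propagates by continuity to all $t\in[0,T]$ simultaneously on a single set of full probability; as $T>0$ was arbitrary it holds for all $t\ge0$.

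The argument is short once the mild representation is available, and the step I expect to require the most care is the identification of the scalar majorant with $y$: one must follow how the semigroup constant from $\|S_A(t)\|_{\mathcal{L}(V)}\le M e^{\omega t}$ propagates through \Ref{evol.odhad} into both the initial datum and the forcing of the comparison equation \Ref{a_mild.rce}, and check that $r\mapsto\|F(r)\|_V\in L^2([0,T])$ so that Theorem~\ref{main.nelin} genuinely applies to the one--dimensional equation. Everything else (the norm estimate and the passage from a pointwise-in-$t$ inequality to one valid for all $t$) is routine.
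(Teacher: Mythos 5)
Your proposal is correct and follows essentially the same route as the paper: bound $\|X_t\|_V$ through the mild representation \Ref{am_rce} and the estimate \Ref{evol.odhad}, then recognise the resulting scalar expression, via Theorem~\ref{main.nelin} applied to the one--dimensional affine equation, as (a) solution $y$ of \Ref{a_mild.rce}. The only cosmetic point is the logical direction in the identification step: at this stage one should say that the majorant \emph{is a} solution of \Ref{a_mild.rce} (uniqueness in $\mathcal M$ is only proved later, in Corollary~\ref{dusl}), which is exactly what the statement requires; your added continuity and integrability checks are fine but not essential to the argument.
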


\begin{proof}
The proof easily follows from \Ref{am_rce} and \Ref{evol.odhad}
because
\begin{align}
\|X_t\|_V&\leq M \exp\Big\{bB^H_t-\frac{1}{2}b^2t^{2H}+\omega
t\Big\}\|x\|_V \nonumber\\&\qquad+ \int_0^t
\exp\Big\{b\big(B^H_t-B^H_s\big)-\frac{1}{2}b^2(t-s)^{2H}+\omega(t-s)\Big\}M\|F(s)\|_V\d
s \label{a_odhad}
\end{align}
and by Theorem~\ref{main.nelin} the right--hand side of
\Ref{a_odhad} is exactly the formula for the solution
to~\Ref{a_mild.rce}.
\end{proof}

\begin{Dusledek}
For each $p\geq 1$ there exists a constant $c_p>0$ depending only
on $p$ such that
\begin{align}
\E\big[\|X_t\|^p_V\big]&\leq c_p M
\exp\Big\{\frac{(p^2-p)b^2}{2}t^{2H}+p\omega
t\Big\}\|x\|_V^p\nonumber\\&\qquad + M t^{p-1}\int_0^t
\exp\Big\{\frac{(p^2-p)b^2}{2}(t-s)^{2H}+p\omega
(t-s)\Big\}\|F(s)\|_V^p\d s,\ t\geq 0.\label{a1_odhad}
\end{align}
In particular, if $F(t)\equiv F$ does not depend on $t\geq 0$,
for each $\epsilon>0$ there exists $C_{\epsilon}>0$ such that
\begin{equation}\label{a2_odhad}
\E\big[\|X_t\|^p_V\big]\leq C_{\epsilon} \exp\{(\hat c
+\epsilon)t^{2H}\},\ t\geq 0,
\end{equation}
holds with $\hat c = 1/2 b^2(p^2-p)$.
\end{Dusledek}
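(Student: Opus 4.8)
The plan is to extract the moment estimates directly from the pathwise bound \Ref{a_odhad} derived in the proof of the preceding Proposition, which already exhibits $\|X_t\|_V$ as dominated by a geometric fractional Brownian term plus a convolution integral. First I would raise \Ref{a_odhad} to the $p$-th power and use $(a+b)^p\le 2^{p-1}(a^p+b^p)$. For the leading term one only needs that $B^H_t$ is centered Gaussian with $\E\big[(B^H_t)^2\big]=t^{2H}$, so that $\E\big[\exp\{pbB^H_t\}\big]=\exp\{\tfrac12 p^2b^2t^{2H}\}$; multiplying by the deterministic factor $\exp\{-\tfrac p2 b^2t^{2H}\}$ coming from the $p$-th power yields precisely the coefficient $\tfrac12(p^2-p)b^2$ in front of $t^{2H}$. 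For the integral term I would apply H\"older's inequality in the form $\big(\int_0^t g(s)\,\d s\big)^p\le t^{p-1}\int_0^t g(s)^p\,\d s$, then interchange $\E$ with $\int_0^t$ by Fubini, and finally perform the same Gaussian computation for each increment $B^H_t-B^H_s$, whose variance is $(t-s)^{2H}$; this reproduces $\tfrac12(p^2-p)b^2(t-s)^{2H}$ in the exponent. Collecting the numerical constants into $c_p$ gives \Ref{a1_odhad}.

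For the second assertion one sets $F(s)\equiv F$ in \Ref{a1_odhad}, so that $\|F\|_V^p$ leaves the integral; after the substitution $u=t-s$ the right-hand side of \Ref{a1_odhad} becomes $c_pM\|x\|_V^p\exp\{\hat c\,t^{2H}+p\omega t\}+M\|F\|_V^p\,t^{p-1}\int_0^t\exp\{\hat c\,u^{2H}+p\omega u\}\,\d u$, where $\hat c=\tfrac12 b^2(p^2-p)\ge 0$ (recall $b\ne 0$). What is then left to do is to absorb the polynomial prefactors $t^{p-1}$, $t^p$ and the linear exponential terms $p\omega t$, $p\omega u$ into an arbitrarily small multiple of $t^{2H}$, at the cost of a constant.

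This last absorption is the only step that needs any care, and it is here that the assumption $H>1/2$ is essential: since $2H>1$, for every $\eta>0$ there exist constants $c(\eta),c'(\eta)$ such that $p\omega t\le \eta t^{2H}+c(\eta)$ and $t^p\le c'(\eta)\exp\{\eta t^{2H}\}$ for all $t\ge 0$ --- split at a large threshold $t_0$: below $t_0$ both quantities are bounded, while for $t\ge t_0$ the superlinear term $t^{2H}$ dominates $t$ and any fixed power of $t$. Using the first inequality inside the integral, $\int_0^t\exp\{\hat c\,u^{2H}+p\omega u\}\,\d u\le e^{c(\eta)}\int_0^t\exp\{(\hat c+\eta)u^{2H}\}\,\d u\le e^{c(\eta)}\,t\exp\{(\hat c+\eta)t^{2H}\}$, the last bound because the integrand is nondecreasing ($\hat c+\eta>0$). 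Hence the integral contribution is at most a constant times $t^p\exp\{(\hat c+\eta)t^{2H}\}$, and invoking $t^p\le c'(\eta)\exp\{\eta t^{2H}\}$ --- together with the analogous estimate for the first term --- one arrives at a bound of the form $(\text{const})\cdot\exp\{(\hat c+2\eta)t^{2H}\}$. Choosing $\eta=\epsilon/2$ and lumping all remaining constants into $C_\epsilon$ yields \Ref{a2_odhad} with $\hat c=\tfrac12 b^2(p^2-p)$, as claimed.
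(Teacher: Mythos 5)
Your proposal is correct and follows essentially the same route as the paper: raise the pathwise bound \Ref{a_odhad} to the $p$-th power, use the Gaussian exponential moment $\E\big[\exp\{pb(B^H_t-B^H_s)\}\big]=\exp\{\tfrac12 p^2b^2(t-s)^{2H}\}$ together with H\"older's inequality on the convolution term to get \Ref{a1_odhad}, and then deduce \Ref{a2_odhad}. The only difference is that you spell out the absorption of the factors $t^{p-1}$, $t^p$ and $p\omega t$ into $\epsilon t^{2H}$ (using $2H>1$), a step the paper leaves as ``an immediate consequence,'' which is a harmless elaboration.
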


\begin{proof}
The inequality~\Ref{a1_odhad} easily follows from~\Ref{a_odhad}
if we take into account that
$$\E\left[\exp\left\{p\Big(b\big(B^H_t-B^H_s\big)-\frac{1}{2}b^2(t-s)^{2H}+\omega(t-s)\Big)\right\}\right] = \exp\big\{\hat c (t-s)^{2H}+p\omega (t-s)\big\}$$
for all $0\leq s\leq t$ and apply the H\"{o}lder inequality on
the second term on the right--hand side of~\Ref{a_odhad}. The
inequality~\Ref{a2_odhad} is an immediate consequence
of~\Ref{a1_odhad}.
\end{proof}

\begin{remark}
A simple one--dimensional example shows that the bound $\hat c$
in~\Ref{a2_odhad} is, in some sense, sharp. Take $V=\R,\,
A=\omega,\, F=0,\, x\neq 0$, then
$$|X_t|^p=|x|^p\exp\Big\{p\omega t -\frac{1}{2} b^2p t^{2H} + pbB^H_t\Big\},\ t\geq 0,\,p>1,$$
hence for each $\epsilon >0$ there exists $\tilde C_{\epsilon}>0$
such that
$$\E\big[|X_t|^p_V\big]=|x|^p\exp\big\{\hat c t^{2H} + p\omega t\big\}\geq \tilde C_{\epsilon}\exp\{(\hat c - \epsilon)t^{2H}\},\ t\geq 0.$$
It means that for $p>1$ the $p$--th moment of the solution to
linear equation may be destabilized by adding bilinear fractional
noise of the form $bX_t \dot{B}^H_t,b\neq 0$, even if the
original equation is stable (here $\omega<0$). It may be
interesting to note that from~\cite{daprato2}, Remark 3.7,
applied to the same example it follows that the solution tends to
zero pathwise exponentially fast as $t\rightarrow +\infty$, even
if the equation without noise is not stable (i.e. $\omega>0$).
\end{remark}

\section{Uniqueness of mild solution}\label{jzn_mild_res}
This section is devoted to the proof of the uniqueness of the mild
solution to the equation
\begin{equation}\label{jzn.rce}
\d X_t = A X_t\d t + B X_t\d B^H_t,\ X_0 = x,
\end{equation}
on the interval $[0,T]$. Let $H>1/2$ and recall that $S_B$ is a strongly continuous group generated by $B$ and $U$ is a strongly continuous evolution system associated with operators $A-Ht^{2H-1}B^2,t\in[0,T]$.

\begin{Veta}\label{ex.lin.}
Let the conditions {\rm (A1), (A2), (AB)} be satisfied and let
$B\in\mathcal{L}(V)$. Then
the process $X=\{X_t,t\in[0,T]\}$ given by
\begin{equation}\label{res}
X_t=S_B(B^H_t)U(t,0)x
\end{equation}
is a mild solution to the equation~{\rm\Ref{jzn.rce}}, i.e.
$$X_t=S_A(t)x+\int_0^t S_A(t-r)BX_r\d B^H_r\quad
\P\!\textrm{--}\, a.s.$$
for all $t\in [0,T]$, where $\{S_A(t),t\geq 0\}$ is an analytic semigroup generated by~$A$.
\end{Veta}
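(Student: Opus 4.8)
The plan is to read off the mild representation from the weak formulation, which is already available. Since $B\in\mathcal{L}(V)$, the extra hypotheses of Theorem~\ref{evol.syst} hold automatically: $B$ generates the uniformly continuous group $S_B(u)={\rm e}^{uB}$, one has ${\rm Dom}(B^2)={\rm Dom}((B^*)^2)=V$, and $\bar A(t)=A-Ht^{2H-1}B^2$ is a bounded perturbation of $A$, so by Lemma~\ref{anal.syst} the evolution system $U$ exists. Therefore Theorem~\ref{evol.syst} (with initial time $s=0$) applies and $X_t=S_B(B^H_t)U(t,0)x=U_Y(t,0)x$ is a weak solution of \Ref{jzn.rce}:
\[
\langle X_t,\zeta\rangle_V=\langle x,\zeta\rangle_V+\int_0^t\langle X_r,A^*\zeta\rangle_V\,\d r+\int_0^t\langle X_r,B^*\zeta\rangle_V\,\d B^H_r\qquad\P\textrm{--a.s.}
\]
for all $t\in[0,T]$ and all $\zeta\in D^*={\rm Dom}(A^*)$. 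In addition $t\mapsto X_t$ is continuous, $D^H_\theta X_r=BX_r\,I_{(0,r]}(\theta)$ (since $X_r={\rm e}^{B^H_rB}U(r,0)x$ depends smoothly on the Gaussian $B^H_r$ through bounded operators), and $\sup_{r\le T}\|X_r\|_V$ has moments of all orders by Fernique's theorem, exactly as in the proof of Lemma~\ref{fub1}; this makes all the Skorokhod integrals below well defined via \Ref{int_inkluse}--\Ref{odhad.int}.

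Now fix $t\in(0,T]$ and $\zeta\in D^*$, take a partition $0=s_0<\dots<s_n=t$, and apply the weak identity at $s_{i+1}$ minus at $s_i$ with the test vector $\phi_{i+1}:=S_{A^*}(t-s_{i+1})\zeta$, which lies in $D^*$ because the adjoint semigroup $S_{A^*}=(S_A)^*$ maps ${\rm Dom}(A^*)$ into itself. Summing over $i$, an Abel summation rewrites the left-hand side as $\langle X_t,\zeta\rangle_V-\langle x,S_{A^*}(t)\zeta\rangle_V$ minus $\sum_i\langle X_{s_i},\phi_{i+1}-\phi_i\rangle_V$; since $\phi_{i+1}-\phi_i=-\int_{s_i}^{s_{i+1}}S_{A^*}(t-s)A^*\zeta\,\d s$ and $r\mapsto X_r$ is continuous, the latter sum converges, as the mesh tends to $0$, to $-\int_0^t\langle X_s,A^*S_{A^*}(t-s)\zeta\rangle_V\,\d s$. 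On the right-hand side the Lebesgue-integral term converges to $\int_0^t\langle X_r,A^*S_{A^*}(t-r)\zeta\rangle_V\,\d r$ by the same continuity, and these two $A^*$-terms cancel. What survives is the stochastic term $\int_0^t\langle BX_r,\psi^{(n)}(r)\rangle_V\,\d B^H_r$, where $\psi^{(n)}$ is the step function equal to $\phi_{i+1}$ on $(s_i,s_{i+1}]$, and one needs it to converge to $\int_0^t\langle BX_r,S_{A^*}(t-r)\zeta\rangle_V\,\d B^H_r$.

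This last convergence is the main obstacle: no Gronwall-type shortcut is available and the Skorokhod integral has to be estimated directly. I would do it through \Ref{odhad.int} applied to the difference of integrands. Both $\langle BX_r,\psi^{(n)}(r)\rangle_V\to\langle BX_r,S_{A^*}(t-r)\zeta\rangle_V$ and $D^H_w\langle BX_r,\psi^{(n)}(r)\rangle_V=\langle B^2X_r,\psi^{(n)}(r)\rangle_V\,I_{(0,r]}(w)\to\langle B^2X_r,S_{A^*}(t-r)\zeta\rangle_V\,I_{(0,r]}(w)$ hold pointwise, with bounds uniform in $n$, since $r\mapsto S_{A^*}(t-r)\zeta$ is continuous (hence bounded) on $[0,t]$ and $X$ has the moments furnished by Fernique's theorem. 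Hence the dominated convergence theorem together with \Ref{odhad.int} gives convergence of the Skorokhod integrals in $L^2(\Omega)$, in particular $\P$--a.s. along a subsequence.

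Collecting the limits, $\langle X_t,\zeta\rangle_V=\langle x,S_{A^*}(t)\zeta\rangle_V+\int_0^t\langle X_r,B^*S_{A^*}(t-r)\zeta\rangle_V\,\d B^H_r$. Rewriting $\langle x,S_{A^*}(t)\zeta\rangle_V=\langle S_A(t)x,\zeta\rangle_V$ and $\langle X_r,B^*S_{A^*}(t-r)\zeta\rangle_V=\langle S_A(t-r)BX_r,\zeta\rangle_V$ (as $B^*S_{A^*}(t-r)=(S_A(t-r)B)^*$), and pulling $\zeta$ out of the Skorokhod integral by $\R$-linearity and Remark~\ref{pozn} (equivalently Lemma~\ref{fubini} with $G$ a one-point set), one gets $\langle X_t,\zeta\rangle_V=\big\langle S_A(t)x+\int_0^t S_A(t-r)BX_r\,\d B^H_r,\zeta\big\rangle_V$ for every $\zeta\in D^*$. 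Finally, $A$ being closed and densely defined on a Hilbert space, $D^*={\rm Dom}(A^*)$ is dense in $V$, whence $X_t=S_A(t)x+\int_0^t S_A(t-r)BX_r\,\d B^H_r$ $\P$--a.s. for every $t\in[0,T]$, as claimed.
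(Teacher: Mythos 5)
Your proposal is essentially correct, but it follows a genuinely different route from the paper: the paper's proof of Theorem~\ref{ex.lin.} is simply a citation to \cite{duncan}, where the mild identity is verified directly for the explicit process $S_B(B^H_t)U(t,0)x$ by computation with the Skorokhod integral, whereas you start from Theorem~\ref{evol.syst} (whose extra hypotheses (B1)--(B3) you correctly observe are automatic once $B\in\mathcal{L}(V)$) and convert the weak formulation into the mild one by testing against $S_{A^*}(t-\cdot)\zeta$, Abel summation, cancellation of the $A^*$--terms, and an $L^2(\Omega)$ passage to the limit in the Skorokhod term via \Ref{odhad.int} together with Fernique--type bounds. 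In effect you re-derive the paper's own integration-by-parts Lemma~\ref{per_partes} and the weak-to-mild step that the paper performs inside the proof of Theorem~\ref{w_jzn} with $\zeta(s)=S_A^*(t-s)\xi$; so your argument is legitimate and self-contained given Theorem~\ref{evol.syst}, and what it buys is independence from the computations in \cite{duncan}, at the price of the technical verifications you only sketch: membership of the scalar integrands, of their indicator truncations over the partition intervals, and of the $V$-valued integrand $S_A(t-\cdot)BX_\cdot$ in ${\rm Dom}\,\delta_H$ (via Remark~\ref{pozn} and $\mathbb{D}^{1,2}_H(|\mathcal{H}|)$--estimates); strong continuity of the adjoint semigroup $S_{A^*}$ (valid here since $V$ is Hilbert); and a countable dense family of $\zeta\in D^*$ so that the $\zeta$--dependent null sets can be merged before removing the test vector. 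Two small corrections: the parenthetical justification ``Lemma~\ref{fubini} with $G$ a one-point set'' is vacuous (a single point has measure zero) --- the correct reason for pulling $\zeta$ out is that the Hilbert-space-valued divergence commutes with bounded linear functionals; and the formula $D^H_\theta X_r=BX_r\,I_{(0,r]}(\theta)$ needs the standard truncation argument, since $\xi\mapsto{\rm e}^{\xi B}U(r,0)x$ is smooth but not in $\mathcal{C}^\infty_b$, although Fernique's theorem makes this routine.
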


\begin{proof}
See~\cite{duncan}.
\end{proof}

The aim is to show, that $X$ defined by~\Ref{res} is a unique mild
solution to~\Ref{jzn.rce}. The idea of the proof is to use the
fractional Wiener chaos decomposition as in the
paper~\cite{perez-abreu}, where the result is proved in a
one--dimensional case.

The construction of multiple fractional integrals and fractional
Wiener chaos decomposition that are used below, are made only for
real--valued random variables. However, all remains true for
Hilbert space--valued random variables (see e.g.~\cite{nourdin}).
For the simplicity the Hilbert space--valued notation is the same
as the real--valued notation.

Let $\mathcal{H}^{\otimes n}$ denote the $n$th tensor product of
$\mathcal{H}$ for any $n\geq 2$. Set $\mathcal{H}^{\otimes
1}\equiv \mathcal{H}$ and $\mathcal{H}^{\otimes 0}\equiv \R$ or
$V$, respectively.

\begin{Definice}
Let $n\in \N$. For $f\in \mathcal H^{\otimes n}$ symmetric the
multiple fractional integral of order $n$ of $f$ is defined as
$$I^{H}_n(f)=\delta_H^n(f),$$
where $\delta_H^n$ is the multiple divergence operator (Skorokhod
integral) of order $n$ (for the definition see e.g.~\cite{nourdin}).
\end{Definice}
\noindent
Note that $\delta_H^1\equiv \delta_H$.

As in the Wiener case the functions $F\in
L^2(\Omega;\mathcal{G},\P)$ (where $\mathcal{G}$ denotes the
$\sigma$--field generated by $\{B^H_t,t\in[0,T]\}$) admit the
unique fractional Wiener chaos decomposition
$$F=\E[F]+\sum_{n=1}^{+\infty} I^H_n(f_n),$$
where $f_n\in \mathcal H^{\otimes n}$ are symmetric elements
which are uniquely determined (see~\cite{nourdin} or
\cite{nualart}). Let
$$\mathcal{H}_n=I^H_n(\mathcal H^{\otimes n})$$
be the fractional Wiener chaos of order $n$.

\begin{Veta}\label{jzn}
Under the assumptions of Theorem~\ref{ex.lin.} the mild solution
$$\{X_t=S_B(B^H_t)U(t,0)x,\ t\in [0,T]\},$$
to the equation~{\rm\Ref{jzn.rce}} is unique in ${\rm
Dom}\,\delta_H$.
\end{Veta}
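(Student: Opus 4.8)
The plan is to use the fractional Wiener chaos expansion, extending to the Hilbert--space setting the one--dimensional argument of~\cite{perez-abreu}. By Theorem~\ref{ex.lin.} the process $X_t=S_B(B^H_t)U(t,0)x$ is itself a mild solution lying in ${\rm Dom}\,\delta_H$, so it suffices to show that a mild solution in ${\rm Dom}\,\delta_H$ is uniquely determined by~\Ref{jzn.rce}. Fix $t\in[0,T]$ and let $\{X_r,r\in[0,T]\}$ be any mild solution in ${\rm Dom}\,\delta_H$; write its chaos expansion $X_r=\E[X_r]+\sum_{n\geq 1}I^H_n(f_n(r))$ with symmetric kernels $f_n(r)\in\mathcal{H}^{\otimes n}$ (in the $V$--valued sense of~\cite{nourdin}). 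Since $B\in\mathcal{L}(V)$ and $S_A(t-r)$ is bounded, the process $r\mapsto \mathbf{1}_{[0,t]}(r)S_A(t-r)BX_r$ has chaos expansion with $n$-th kernel $r\mapsto \mathbf{1}_{[0,t]}(r)S_A(t-r)Bf_n(r)$, and it belongs to ${\rm Dom}\,\delta_H$ because the stochastic integral in the mild formula is by assumption well defined.

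The decisive step is to apply the standard formula for the chaos expansion of a Skorokhod integral (see e.g.~\cite{nualart}, or~\cite{nourdin} in the $V$--valued case): if $u_r=\sum_{n\geq 0}I^H_n(g_n(\cdot,r))$, then $\delta_H(u)=\sum_{n\geq 0}I^H_{n+1}(\widetilde{g_n})$, where $\widetilde{\,\cdot\,}$ denotes symmetrization in all $n+1$ arguments. Writing the mild equation as
\[ X_t=S_A(t)x+\delta_H\big(\mathbf{1}_{[0,t]}(\cdot)S_A(t-\cdot)BX_\cdot\big) \]
and comparing chaos components of equal order on both sides yields $\E[X_t]=S_A(t)x$, then $f_1(t)(\cdot)=\mathbf{1}_{[0,t]}(\cdot)S_A(t-\cdot)BS_A(\cdot)x$, and the recursion $f_{n+1}(t)=\widetilde{\mathbf{1}_{[0,t]}(\cdot)S_A(t-\cdot)Bf_n(\cdot)}$ for $n\geq 1$. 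By induction on $n$ every kernel $f_n(t)$ is thus uniquely determined by $A$, $B$ and $x$, so two mild solutions in ${\rm Dom}\,\delta_H$ have identical chaos expansions and therefore coincide $\P$--a.s. for each $t\in[0,T]$; together with Theorem~\ref{ex.lin.} this gives the asserted uniqueness.

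I expect the main obstacle to be the rigorous justification of the chaos--expansion formula for $\delta_H$ applied to the particular integrand $r\mapsto \mathbf{1}_{[0,t]}(r)S_A(t-r)BX_r$: one has to verify the joint measurability of the kernels $(r,\cdot)\mapsto f_n(r)(\cdot)$ and enough integrability so that $\mathbf{1}_{[0,t]}(\cdot)S_A(t-\cdot)Bf_n(\cdot)$ genuinely defines an element of $\mathcal{H}^{\otimes(n+1)}$, and that the summability $\sum_n (n+1)!\,\|\widetilde{g_n}\|^2_{\mathcal{H}^{\otimes(n+1)}}<+\infty$ inherited from $X\in{\rm Dom}\,\delta_H$ makes the interchange of $\delta_H$ with the infinite chaos series and the term--by--term comparison legitimate. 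A further point is that the new integration variable enters through the $\mathcal{H}$--inner product rather than the $L^2([0,T])$--one; this is handled by passing to the underlying Wiener process via the isometry $\mathcal{K}^*_H$ and~\Ref{K_int}, which reduces everything to the classical Wiener chaos, where the commutation formula is well known.
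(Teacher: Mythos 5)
Your proposal is correct and follows essentially the same route as the paper: expand in the fractional Wiener chaos, use that the Skorokhod integral raises the chaos order by one with symmetrized kernels, and conclude by induction via uniqueness of the expansion. The only (cosmetic) difference is that the paper applies this to the difference $Z=X-Y$ of two solutions, showing all its kernels vanish starting from $z_0=\E[Z_t]=0$, whereas you determine the kernels of a single solution recursively from $A$, $B$, $x$; both yield the same conclusion.
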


\begin{proof}
Clearly,
$X=\{X_t,t\in[0,T]\}\in\mathbb{D}^{1,2}_H(|\mathcal{H}|)\subset{\rm
Dom}\,\delta_H$. Take another mild solution
$Y=\{Y_t,t\in[0,T]\}\in {\rm Dom}\,\delta_H$ to the
equation~\Ref{jzn.rce}. Then the processes $X$ and $Y$ satisfy
\begin{align*}
X_t&=S_A(t)x+\int_0^t S_A(t-r)BX_r\d B^H_r,\\
Y_t&=S_A(t)x+\int_0^t S_A(t-r)BY_r\d B^H_r,
\end{align*}
respectively. Define the process $Z=\{Z_t,t\in[0,T]\}$ as
$$Z_t=X_t-Y_t,\ t\in[0,T].$$
Let
$$Z_t=\sum_{n=0}^{+\infty} I_n\big(z_n(t,\,.\,)\big)$$
be the fractional Wiener chaos decomposition of process $Z$,
where $z_n(t,\,.\,)\in\mathcal{H}^{n+1}$ be the symmetric elements
in the last $n$~variables. Since
$$z_0(t)=I_0\big(z_0(t)\big)=\E[Z_t]=\E[X_t-Y_t]=S_A(t)x-S_A(t)x=0$$
for all $t\in[0,T]$, we get
$$Z_t=\sum_{n=1}^{+\infty} I_n\big(z_n(t,\,.\,)\big).$$
The definition of Skorokhod integral via multiple integrals yields 
\begin{align*}
\sum_{n=1}^{+\infty} I_n\big(z_n(t,\,.\,)\big)&=Z_t=\int_0^t S_A(t-r)BZ_r\d B^H_r=\int_0^t\sum_{n=0}^{+\infty} I_n\big(S_A(t-r)B z_n(r,\,.\,)\big)\d B^H_r\\&=
\sum_{n=0}^{+\infty}I_{n+1}\big(\textrm{Sym}\big(S_A(t-\,.\,)B z_n(\,.\,)\big)\big)=
\sum_{n=1}^{+\infty}I_n\big(\textrm{Sym}\big(S_A(t-\,.\,)B z_{n-1}(\,.\,)\big)\big),
\end{align*}
where $\textrm{Sym}(f)$ denotes the symmetrization of $f$ in all variables. From the uniqueness of Wiener chaos expansion it follows
$$z_n(t,\,.\,)=\textrm{Sym}\big(S_A(t-\,.\,)B z_{n-1}(\,.\,)\big),\ n\geq 1.$$
Since $z_0\equiv 0$ we obtain by induction that
$$z_1\equiv 0,z_2\equiv 0,\ldots$$
hence $Z\equiv 0$ and the proof is completed.
\end{proof}

\section{Uniqueness of weak solution}\label{jzn_slab_res}
Let $\mathcal M$ be the space of $\mathcal{B}([0,T])\otimes\mathcal F$--measurable processes $Z:[0,T]\times\Omega\rightarrow V$ with continuous trajectories such that
$$Z\in {\rm Dom}\,\delta_H\quad {\rm and}\quad \E\sup_{t\in[0,T]}\|Z_t\|^2_V<+\infty.$$

The aim is to show that a weak solution to the equation
\begin{equation}\label{rce_w}
\d X_t = \big(AX_t + F(t)\big)\d t + BX_t\d B^H_t,\ X_0=x,
\end{equation}
is unique in the space $\mathcal M$. On this purpose the following
version of integration by parts formula is necessary.

\begin{Lemma}\label{per_partes}
Let $Y\in\mathcal M$ be a weak solution to the equation
$$\d Y_t = AY_t\d t + BY_t\d B^H_t,\ X_0=0.$$
Then
\begin{equation}\label{p_p}
\big\langle Y_t,\zeta(t)\big\rangle_V = \int_0^t \big\langle
Y_r,A^*\zeta(r)+\zeta'(r)\big\rangle_V \d r + \int_0^t
\big\langle Y_r,B^*\zeta(r)\big\rangle_V\d B^H_r
\end{equation}
for any $\zeta\in\mathcal{C}^1\big([0,T];D^*\big)$.
\end{Lemma}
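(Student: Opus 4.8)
The plan is to bootstrap the definition of weak solution, which is precisely identity \Ref{p_p} for every time--independent test element $y\in D^*$, up to a time--dependent $\zeta$ by a telescoping Riemann--sum argument in the time variable, and to close the passage to the limit via the closedness of the Skorokhod integral $\delta_H$.

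First I would fix $t\in[0,T]$, $\zeta\in\mathcal{C}^1([0,T];D^*)$ and a partition $\pi:0=t_0<t_1<\dots<t_n=t$. Since $Y_0=0$, the sum $\sum_{i=0}^{n-1}\big(\langle Y_{t_{i+1}},\zeta(t_{i+1})\rangle_V-\langle Y_{t_i},\zeta(t_i)\rangle_V\big)$ telescopes to $\langle Y_t,\zeta(t)\rangle_V$, and each increment splits as $\langle Y_{t_{i+1}}-Y_{t_i},\zeta(t_{i+1})\rangle_V+\langle Y_{t_i},\zeta(t_{i+1})-\zeta(t_i)\rangle_V$. To the first term I apply the weak--solution identity with the \emph{fixed} element $\zeta(t_{i+1})\in D^*$ at the times $t_{i+1}$ and $t_i$ and subtract; since $\langle Y_\cdot,B^*\zeta(t_{i+1})\rangle_VI_{(0,s]}\in{\rm Dom}\,\delta_H$ for every $s$ (part of the weak--solution hypothesis) and $\delta_H$ is linear, the resulting difference of stochastic integrals equals $\delta_H\big(\langle Y_\cdot,B^*\zeta(t_{i+1})\rangle_VI_{(t_i,t_{i+1}]}\big)$. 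To the second term I apply $\zeta(t_{i+1})-\zeta(t_i)=\int_{t_i}^{t_{i+1}}\zeta'(r)\,\d r$. Summing over $i$, and using that $\delta_H$ of a finite sum of processes carried by disjoint subintervals is the corresponding sum of Skorokhod integrals, I obtain
$$\langle Y_t,\zeta(t)\rangle_V=\int_0^t\langle Y_r,A^*\zeta^{\pi}(r)\rangle_V\,\d r+\delta_H\big(\langle Y_\cdot,B^*\zeta^{\pi}(\cdot)\rangle_VI_{(0,t]}\big)+\int_0^t\langle Y^{\pi}_r,\zeta'(r)\rangle_V\,\d r,$$
where $\zeta^{\pi}$ is the step function equal to $\zeta(t_{i+1})$ on $(t_i,t_{i+1}]$ and $Y^{\pi}$ the step process equal to $Y_{t_i}$ on $(t_i,t_{i+1}]$.

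I would then let the mesh $|\pi|\to 0$. Because $A^*$ is closed, $D^*$ with its graph norm is a Banach space on which, by the closed graph theorem (using $D^*\subset{\rm Dom}(B^*)$, which follows from (B1)), $B^*$ acts boundedly into $V$; hence $\zeta^{\pi}\to\zeta$ uniformly on $[0,t]$ in the $D^*$--norm forces $A^*\zeta^{\pi}\to A^*\zeta$ and $B^*\zeta^{\pi}\to B^*\zeta$ uniformly in $V$, while $Y^{\pi}_r\to Y_r$ uniformly on $[0,t]$ for almost every $\omega$ by path continuity of $Y$. With $\E\sup_{[0,T]}\|Y_r\|_V^2<+\infty$ the two Lebesgue integrals are dominated, uniformly in $\pi$, by an $L^2(\Omega)$ random variable and converge a.s., hence converge in $L^2(\Omega)$ to $\int_0^t\langle Y_r,A^*\zeta(r)\rangle_V\,\d r$ and $\int_0^t\langle Y_r,\zeta'(r)\rangle_V\,\d r$. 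For the stochastic term put $u^{\pi}(r)=\langle Y_r,B^*\zeta^{\pi}(r)\rangle_VI_{(0,t]}(r)$ and $u(r)=\langle Y_r,B^*\zeta(r)\rangle_VI_{(0,t]}(r)$: by \Ref{vnor_K_H}, $\|u^{\pi}-u\|_{\mathcal H}\le K_e\|u^{\pi}-u\|_{L^2([0,T])}$, and $\sup_r\|B^*(\zeta^{\pi}(r)-\zeta(r))\|_V\to 0$ together with the same domination give $u^{\pi}\to u$ in $L^2(\Omega;\mathcal H)$. Rearranging the displayed identity exhibits $\delta_H(u^{\pi})$ as $\langle Y_t,\zeta(t)\rangle_V$ minus the two Lebesgue integrals, so $\delta_H(u^{\pi})$ converges in $L^2(\Omega)$. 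Since $\delta_H$ is the adjoint of the densely defined operator $D^H$ and hence a closed operator, $u\in{\rm Dom}\,\delta_H$ and $\delta_H(u^{\pi})\to\delta_H(u)$; passing to the limit in the display gives \Ref{p_p}.

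The step I expect to be the main obstacle is exactly this last convergence of Skorokhod integrals. Because $Y$ is only assumed to lie in ${\rm Dom}\,\delta_H$ and not in $\mathbb{D}^{1,2}_H(|\mathcal H|)$, the quantitative estimate \Ref{odhad.int} is unavailable for $u^{\pi}-u$, so one cannot bound $\delta_H(u^{\pi})-\delta_H(u)$ directly; the argument is instead forced through the abstract closedness of $\delta_H$, which is why it is crucial to secure $L^2(\Omega;\mathcal H)$--convergence $u^{\pi}\to u$ on one side and, independently and essentially for free, convergence of $\delta_H(u^{\pi})$ from the elementary Lebesgue terms on the other. A subsidiary point needing care is the well--definedness and additivity of $\delta_H$ over the subintervals $(t_i,t_{i+1}]$, which is extracted from the weak--solution hypothesis applied at the partition points together with the linearity of $\delta_H$.
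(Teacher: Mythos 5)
Your argument is correct, and it rests on the same core machinery as the paper's proof: telescoping over a partition of $[0,t]$, the weak--solution identity applied on each subinterval with a fixed element of $D^*$, dominated convergence (via $\E\sup_{[0,T]}\|Y\|_V^2<+\infty$ and path continuity) for the Lebesgue terms, and the closedness of $\delta_H$ together with the embedding \Ref{vnor_K_H} to identify the limit of the stochastic term. The organization, however, is genuinely different. The paper argues in two approximation steps: it first establishes \Ref{p_p} for product test functions $\zeta(t)=\varphi(t)\xi$ with $\varphi\in\mathcal{C}^1([0,T])$, $\xi\in D^*$, by precisely your kind of Riemann--sum argument, and then approximates a general $\zeta$ by elementary functions of this form in $\mathcal{C}^1([0,T];D^*)$, passing to the limit a second time through the closedness of $\delta_H$. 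You instead telescope $\langle Y_{t_{i+1}},\zeta(t_{i+1})\rangle_V-\langle Y_{t_i},\zeta(t_i)\rangle_V$ directly for the time--dependent $\zeta$, so a single passage to the limit (mesh of the partition to zero) suffices. This buys two things: you avoid the density of elementary product functions in $\mathcal{C}^1([0,T];D^*)$, which the paper invokes without proof, and you make explicit, via the closed graph theorem and (B1), why $B^*$ acts boundedly from $D^*$ with its graph norm into $V$ --- a point the paper uses only implicitly when it writes $\|B^*\|_{\mathcal{L}(V)}$ in its second step. Your closing diagnosis is also accurate: since $Y$ is only assumed to lie in ${\rm Dom}\,\delta_H$, the quantitative bound \Ref{odhad.int} is unavailable and the identification of the limiting stochastic integral must go through abstract closedness, exactly as in the paper. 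One minor point worth recording in a final write--up: the weak--solution identity holds a.s.\ for each fixed test element, so the exceptional null set depends on the finitely many elements $\zeta(t_{i+1})$ used in a given partition; restricting to a countable family of partitions (e.g.\ dyadic) disposes of this, and the paper's proof contains the same implicit step.
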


\begin{proof}
{\bf 1st step:} Let $\zeta$ have the form
\begin{equation}\label{jedn_fce}
\zeta(t)=\varphi(t)\xi,\ \varphi\in\mathcal{C}^1([0,T]),\xi\in
D^*.
\end{equation}
Let $\{t_k,k=0,\ldots,n\}$ be the partition of interval $[0,t]$.
Then
\begin{align}
\langle Y_t,\zeta(t)\rangle_V &= \varphi(t)\langle Y_t,\xi\rangle_V = \sum_{k=0}^{n-1} \big(\varphi(t_{k+1})\langle Y_{t_{k+1}},\xi\rangle_V - \varphi(t_k)\langle Y_{t_k},\xi\rangle_V\big)\label{rozpis}\\& = \sum_{k=0}^{n-1} \big(\varphi(t_{k+1})-\varphi(t_k)\big)\langle Y_{t_{k+1}},\xi\rangle_V + \sum_{k=0}^{n-1} \varphi(t_k)\big(\langle Y_{t_{k+1}},\xi\rangle_V - \langle Y_{t_k},\xi\rangle_V\big)=S_1+S_2.\nonumber
\end{align}
Since
$$
\left|\sum_{k=0}^{n-1} \big(\varphi(t_{k+1})-\varphi(t_k)\big)\langle Y_{t_{k+1}},\xi\rangle_V\right|\leq \|\varphi\|_{\mathcal{C}^1([0,T])}\sup_{r\in[0,T]}\|Y_r\|_V \left(\sum_{k=0}^{n-1} (t_{k+1}-t_k)\right)\|\xi\|_V$$
and
$$\E\Big[\sup_{r\in[0,T]}\|Y_r\|^2_V\Big]<+\infty,$$
it follows
$$S_1=\sum_{k=0}^{n-1} \big(\varphi(t_{k+1})-\varphi(t_k)\big)\langle Y_{t_{k+1}},\xi\rangle_V\konv{n} \int_0^t \varphi'(r)\big\langle Y_r,\xi\big\rangle_V\d r\quad \textrm{in}\ L^2(\Omega)$$
in virtue of the Lebesgue dominated convergence theorem.
\noindent
The second sum $S_2$ can be split into two summands
$$S_2=\sum_{k=0}^{n-1} \varphi(t_k)\left(\int_{t_k}^{t_{k+1}}\langle Y_r,A^*\xi\rangle_V\d r + \int_{t_k}^{t_{k+1}}\langle Y_r,B^*\xi\rangle_V\d B^H_r\right)=S_{21}+S_{22}.$$
The first summand
$$S_{21}=\sum_{k=0}^{n-1} \varphi(t_k)\int_{t_k}^{t_{k+1}}\langle Y_r,A^*\xi\rangle_V\d r\konv{n} \int_0^t \varphi(r)\big\langle Y_r,A^*\xi\big\rangle_V\d r\quad \textrm{in}\ L^2(\Omega)$$
by the Lebesgue dominated convergence theorem because
$$\left|\sum_{k=0}^{n-1} \varphi(t_k)\int_{t_k}^{t_{k+1}}\langle Y_r,A^*\xi\rangle_V\d r\right|\leq \|\varphi\|_{\mathcal{C}^1([0,T])}\sup_{r\in[0,T]}\|Y_r\|_V\|A^*\xi\|_V T.$$
Since $Y_t\in L^2(\Omega)$ satisfies \Ref{rozpis} we conclude that
$$S_{22}=\sum_{k=0}^{n-1} \varphi(t_k)\int_{t_k}^{t_{k+1}}\langle Y_r,B^*\xi\rangle_V\d B^H_r = \int_0^t \sum_{k=0}^{n-1} \varphi(t_k)I_{(t_k,t_{k+1}]}(r)\langle Y_r,B^*\xi\rangle_V\d B^H_r$$
converges to a random variable denoted by $Y_t^1$ in $L^2(\Omega)$ as $n\rightarrow +\infty$.
\noindent
It remains to show that $Y^1_t=\int_0^t \varphi(r)\big\langle
Y_r,B^*\xi\big\rangle_V\d B^H_r$ by the closedness of Skorokhod
integral. Denote
$$\Phi_n(r)=\sum_{k=0}^{n-1} \varphi(t_k)I_{(t_k,t_{k+1}]}(r)\langle Y_r,B^*\xi\rangle_V,\ r\in[0,t], n\in\N.$$
Then $\Phi_n\in\rm{Dom}\,\delta_H$,
$$\Phi_n(r)\konv{n} \varphi(r)\langle Y_r,B^*\xi\rangle_V$$
for any fixed $r,\omega$, and
\begin{align*}
\big|\Phi_n(r)\big|&=\Big|\sum_{k=0}^{n-1} \varphi(t_k)I_{(t_k,t_{k+1}]}(r)\langle Y_r,B^*\xi\rangle_V\Big| \leq
\|\varphi\|_{\mathcal{C}^1([0,T])}\sup_{r\in[0,T]}\|Y_r\|_V\|B^*\xi\|_V\sum_{k=0}^{n-1} I_{(t_k,t_{k+1}]}(r)\\&=
\|\varphi\|_{\mathcal{C}^1([0,T])}\sup_{r\in[0,T]}\|Y_r\|_V\|B^*\xi\|_V.
\end{align*}
By the Lebesgue dominated convergence theorem $\Phi_n\in L^2\big(\Omega;L^2([0,t];V)\big)$ and
$$\Phi_n\konv{n} \varphi \langle Y,B^*\xi\rangle_V\quad {\rm in}\quad L^2\big(\Omega;L^2([0,t];V)\big).$$
By the closedness of Skorokhod integral $Y^1_t=\int_0^t
\varphi(r)\big\langle Y_r,B^*\xi\big\rangle_V\d B^H_r$ and
equality~\Ref{p_p} holds for $\zeta$ of the form~\Ref{jedn_fce}.\\

\noindent {\bf 2nd step:} Let $\zeta\in
\mathcal{C}^1\big([0,T];D^*\big)$. Then there exists a sequence
$\{\zeta_n,n\in \N\}\subset \mathcal{C}^1\big([0,T];D^*\big)$ of
elementary functions of the form~\Ref{jedn_fce} such that
$\zeta_n\konv{n} \zeta$ in $\mathcal{C}^1\big([0,T];D^*\big)$.
The aim is to pass to the limit in the equation
$$\big\langle Y_t,\zeta_n(t)\big\rangle_V = \int_0^t \big\langle
Y_r,A^*\zeta_n(r)+\zeta'(r)\big\rangle_V \d r + \int_0^t
\big\langle Y_r,B^*\zeta_n(r)\big\rangle_V\d B^H_r$$ in
$L^2(\Omega)$. Clearly,
$$\big|\big\langle Y_t,\zeta_n(t)-\zeta(t)\big\rangle_V\big|\leq \sup_{r\in[0,T]}\|Y_r\|_V \|\zeta_n-\zeta\|_{\mathcal{C}^1([0,T];D^*)}\konv{n} 0$$
and
\begin{align*}
\E&\Big[\Big(\int_0^t \big\langle
Y_r,A^*\big(\zeta_n(r)-\zeta(r)\big)+\big(\zeta'_n(r)-\zeta'(r)\big)\big\rangle_V\d
r\Big)^2\Big]\\&\qquad\leq \E\Big[\sup_{r\in[0,T]}\|Y_r\|^2_V\Big]
\|\zeta_n-\zeta\|^2_{\mathcal{C}^1([0,T];D^*)}T^2\konv{n} 0,
\end{align*}
thus $\int_0^t \big\langle Y_r,B^*\zeta_n(r)\big\rangle_V\d
B^H_r\konv{n} Y^2_t\quad \textrm{in}\ L^2(\Omega)$. By the
closedness of Skorokhod integral, $Y^2_t=\int_0^t \big\langle
Y_r,B^*\zeta(r)\big\rangle_V\d B^H_r$ because
$$\E\Big[\int_0^t \!\big\langle
Y_r,B^*\big(\zeta_n(r)-\zeta(r)\big)\big\rangle^2_V\d r\Big]\leq
\E\Big[\sup_{r\in[0,T]}\!\|Y_r\|^2_V\Big]\|B^*\|^2_{\mathcal{L}(V)}
\|\zeta_n-\zeta\|^2_{\mathcal{C}^1([0,T];D^*)}T\konv{n} 0$$
and $\big\langle Y,B^*\zeta_n\big\rangle_V\in \rm{Dom}\,\delta_H$
for any $n\in\N$.
\end{proof}

\noindent
Now, we are able to prove the uniqueness result.

\begin{Veta}\label{w_jzn}
Under the assumptions of Theorem~\ref{main.nelin} the solution to
the equation~{\rm \Ref{rce_w}} is unique in the space~$\mathcal
M$.
\end{Veta}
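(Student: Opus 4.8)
The plan is to reduce the uniqueness of weak solutions to Theorem~\ref{w_jzn} to the already-established uniqueness of \emph{mild} solutions to the homogeneous bilinear equation (Theorem~\ref{jzn}), via the integration-by-parts identity of Lemma~\ref{per_partes}. First I would take two solutions $X^{(1)},X^{(2)}\in\mathcal M$ to \Ref{rce_w} and set $Y_t=X^{(1)}_t-X^{(2)}_t$. Since the affine perturbation $F(t)$ cancels, $Y$ is a weak solution in $\mathcal M$ to the homogeneous equation $\d Y_t=AY_t\,\d t+BY_t\,\d B^H_t$ with $Y_0=0$; note also $Y$ inherits membership in $\mathcal M$ because $\mathcal M$ is a linear space. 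The goal is then to show $Y\equiv 0$.

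Next I would convert the weak formulation of $Y$ into a mild one. Fix $t\in(0,T]$ and apply Lemma~\ref{per_partes} with the test function $\zeta(r)=S_A^*(t-r)y$ for $y\in D^*$; this $\zeta$ lies in $\mathcal C^1([0,T];D^*)$ on $[0,t]$ (here $S_A^*$ is the adjoint analytic semigroup, and for $y\in D^*$ the map $r\mapsto S_A^*(t-r)y$ is $C^1$ with derivative $-A^*S_A^*(t-r)y=-S_A^*(t-r)A^*y$). The crucial cancellation is $A^*\zeta(r)+\zeta'(r)=A^*S_A^*(t-r)y-A^*S_A^*(t-r)y=0$, so \Ref{p_p} collapses to
\begin{align*}
\big\langle Y_t,y\big\rangle_V=\int_0^t\big\langle Y_r,B^*S_A^*(t-r)y\big\rangle_V\,\d B^H_r=\Big\langle\int_0^t S_A(t-r)BY_r\,\d B^H_r,\,y\Big\rangle_V,
\end{align*}
using that $B=B^{**}\in\mathcal L(V)$ commutes through the Skorokhod integral and the duality $\langle\delta_H(u),y\rangle=\delta_H(\langle u,y\rangle)$ for deterministic $y$. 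Since $D^*$ is dense in $V$, this gives $Y_t=\int_0^t S_A(t-r)BY_r\,\d B^H_r=S_A(t)\cdot 0+\int_0^t S_A(t-r)BY_r\,\d B^H_r$ almost surely for each $t$, i.e. $Y$ is a mild solution to \Ref{jzn.rce} with initial datum $x=0$. One also checks $Y\in{\rm Dom}\,\delta_H$ (part of the definition of $\mathcal M$). Then Theorem~\ref{jzn} applies directly: the mild solution to \Ref{jzn.rce} is unique in ${\rm Dom}\,\delta_H$, and since $Y$ and the identically-zero process are both mild solutions with $x=0$, we get $Y\equiv 0$, hence $X^{(1)}=X^{(2)}$.

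I expect the main obstacle to be the rigorous justification of the passage from the weak formulation to the mild one — specifically, verifying that $\zeta(r)=S_A^*(t-r)y$ is an admissible test function in Lemma~\ref{per_partes} (it must be $\mathcal C^1$ into $D^*$, which is where one needs $y\in D^*$ and the analyticity/invariance of the semigroup on $D^*$), and the interchange of the bounded operator $B$, the evaluation against $y$, and the Skorokhod integral. The latter interchanges are routine given $B\in\mathcal L(V)$ and the duality relation defining $\delta_H$, together with the integrability built into $\mathcal M$ (continuous trajectories with $\E\sup_{t}\|Y_t\|_V^2<\infty$), which ensures all the integrands lie in the appropriate domains; but they should be spelled out. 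A secondary technical point is confirming that the mild solution $Y$ so obtained genuinely lies in ${\rm Dom}\,\delta_H$ as a process so that Theorem~\ref{jzn} is applicable — this is immediate from $Y\in\mathcal M$.
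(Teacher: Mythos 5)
Your proposal is correct and follows essentially the same route as the paper: take the difference of two weak solutions in $\mathcal M$, apply Lemma~\ref{per_partes} with the test function $\zeta(r)=S_A^*(t-r)\xi$, $\xi\in D^*$, so that the drift term cancels and the difference satisfies the mild formulation of the homogeneous bilinear equation with zero initial datum, and then conclude by the uniqueness of mild solutions in ${\rm Dom}\,\delta_H$ (Theorem~\ref{jzn}). The only part of the paper's argument you omit is the verification (via the Fernique theorem) that the canonical solution $X^M$ from Theorem~\ref{main.nelin} itself lies in $\mathcal M$, which the paper includes so that the uniqueness class actually contains the constructed solution.
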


\begin{proof}
Let $X^M$ be the solution to the equation
$$X^M_t=U_Y(t,0)x + \int_0^tU_Y(t,r)F(r)\d r,\ t\in [0,T],$$
which is also a weak one to \Ref{rce_w} (Theorem~\ref{main.nelin}), where
$$U_Y(t,s)=S_B(B^H_t-B^H_s)U(t-s,0),\ s\leq t\leq T,$$
(for more details see~\Ref{ns.res}). Using the notation from
Section~\ref{kap.2}
$$\|U_Y(t,s)\|_{\mathcal{L}(V)}\leq M_B{\rm exp}\left\{2\omega_B\|B^H\|_{\mathcal{C}([0,T])}\right\}C_U,\ 0\leq s\leq t\leq T,$$
it follows
\begin{align*}
\E\Big[\sup_{t\in[0,T]}\|X^M_t\|^2_V\Big]&\leq
2C^2_UM^2_B\Big(\|x\|^2_V\E{\rm
exp}\left\{2\omega_B\|B^H\|_{\mathcal{C}([0,T])}\right\}
\\&\qquad\qquad+
\|F\|^2_{L^2([0,T])}T\E{\rm
exp}\left\{4\omega_B\|B^H\|_{\mathcal{C}([0,T])}\right\}\Big)<+\infty,
\end{align*}
by the Fernique Theorem. Therefore, $X^M\in\mathcal M$ (the continuity of trajectories is guaranteed by Theorem~\ref{mild.res}).

\noindent
Take another weak solution $X^1\in\mathcal M$ to \Ref{rce_w} and define
$$\bar X:=X^1-X^M.$$
Then $\bar X$ is a weak solution to the equation
$$\d \bar X_t = A\bar X_t\d t + B\bar X_t\d B^H_t,\ \bar X_0=0.$$
Hence, applying Lemma~\ref{per_partes} to $\langle \bar X_t,\xi\rangle_V$ for any fixed $\xi \in D^*$ and $\zeta(s)=S_A^*(t-s)\xi,s\in[0,t],$ it follows
\begin{align*}
\langle \bar X_t,\xi\rangle_V &= \int_0^t \big\langle
\bar X_r,A^*S_A^*(t-r)\xi-S_A^*(t-r)A^*\xi\big\rangle_V \d r + \int_0^t
\big\langle \bar X_r,B^*S_A^*(t-r)\xi\big\rangle_V\d B^H_r\\&= \int_0^t
\big\langle S_A(t-r)B\bar X_r,\xi\big\rangle_V\d B^H_r= \Big\langle\int_0^t S_A(t-r)B\bar X_r\d B^H_r,\xi\Big\rangle_V.
\end{align*}
Thus Theorem~\ref{mild.res} yields
$$\bar X_t=\int_0^t S_A(t-r)B\bar X_r\d B^H_r=S_B(B^H_t)U(t,0)0=0$$
and $X^1=X^M$.
\end{proof}

\begin{Dusledek}\label{dusl}
The weak solution $\big\{S_B(B^H_t)U(t,0)x,t\in[0,T]\big\}$ to the equation~{\rm\Ref{jzn.rce}} is unique in $\mathcal M$.

\noindent
In particular, the solution
$$X_t=\exp\left\{bB^H_t-\frac{1}{2}b^2t^{2H}+at\right\}x,\ t\in[0,T],$$
to the one--dimensional equation
$$\d X_t = a X_t\d t + b X_t\d B^H_t,\ X_0 = x,$$
is unique in $\mathcal M$.
\end{Dusledek}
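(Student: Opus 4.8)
The plan is to read off the first assertion directly from Theorem~\ref{w_jzn} in the degenerate case $F\equiv 0$, and then to obtain the one--dimensional statement as the scalar instance $V=\R$, $A=aI$, $B=bI$ of that assertion.

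First I would note that $F\equiv 0$ is (trivially) an affine map $[0,T]\to V$ with $\|F\|_V\equiv 0\in L^2([0,T])$, so all the standing hypotheses of Theorem~\ref{main.nelin}, hence of Theorem~\ref{w_jzn}, are in force, and equation~\Ref{rce_w} then coincides with~\Ref{jzn.rce}. The mild solution produced by Theorem~\ref{main.nelin}, namely $X^M_t=U_Y(t,0)x+\int_0^t U_Y(t,r)F(r)\,\d r$, collapses to $U_Y(t,0)x=S_B(B^H_t-B^H_0)U(t,0)x=S_B(B^H_t)U(t,0)x$ since $B^H_0=0$; by Theorem~\ref{evol.syst} (with $s=0$) this process is a weak solution of~\Ref{jzn.rce}, and Theorem~\ref{w_jzn} states exactly that it is the unique such solution lying in $\mathcal M$. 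The only ingredients beyond the cited theorems --- that $S_B(B^H_t)U(t,0)x$ indeed belongs to $\mathcal M$: continuity of trajectories from Theorem~\ref{mild.res}, finiteness of $\E\sup_{t\in[0,T]}\|X_t\|_V^2$ from the Fernique theorem, and membership in ${\rm Dom}\,\delta_H$ because $X\in\mathbb{D}^{1,2}_H(|\mathcal H|)$ --- are already established inside the proof of Theorem~\ref{w_jzn}.

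For the one--dimensional corollary I would specialize to $V=\R$, $A=aI$, $B=bI$ and verify the hypotheses of the theorems just quoted: $B$ is a bounded scalar operator, so (B1), (B2) (with the group $S_B(u)=\exp\{bu\}$), (B3) (as ${\rm Dom}(B^2)=\R$) and the commutation condition (AB) hold at once, while $A$ is bounded, closed and densely defined, so (A1) holds and (A2) follows upon taking $\omega>a$ and estimating the scalar resolvent $(\lambda-a)^{-1}$. Here $\bar A(t)=a-Hb^2t^{2H-1}$, so the evolution family $\{U(t,0)\}$ it generates is multiplication by $\exp\{\int_0^t(a-Hb^2r^{2H-1})\,\d r\}=\exp\{at-\tfrac{1}{2}b^2t^{2H}\}$, and $S_B(B^H_t)=\exp\{bB^H_t\}$; hence $S_B(B^H_t)U(t,0)x=x\exp\{bB^H_t-\tfrac{1}{2}b^2t^{2H}+at\}$, which is precisely the asserted solution, and its uniqueness in $\mathcal M$ is inherited from the first part.

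I do not anticipate a genuine obstacle, since the statement is a corollary precisely because Theorems~\ref{w_jzn}, \ref{main.nelin} and~\ref{mild.res} do all the work. The few points that call for a moment's attention are the identification $U_Y(t,0)x=S_B(B^H_t)U(t,0)x$ via $B^H_0=0$, the explicit integration $\int_0^t Hb^2r^{2H-1}\,\d r=\tfrac{1}{2}b^2t^{2H}$ (legitimate because $2H-1>0$ renders $r^{2H-1}$ continuous on $[0,T]$), and the routine check that the abstract conditions on $B$ are non--vacuous and easily met in the scalar setting.
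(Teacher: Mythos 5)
Your proposal is correct and follows exactly the route the paper intends: the corollary is the special case $F\equiv 0$ of Theorem~\ref{w_jzn}, where the mild solution $X^M$ reduces to $U_Y(t,0)x=S_B(B^H_t)U(t,0)x$, and the one--dimensional statement is the scalar specialization $V=\R$, $A=a$, $B=b$ with the explicit computation of $U(t,0)$ and $S_B(B^H_t)$. Your extra verifications (the hypotheses (A1), (A2), (B1)--(B3), (AB) in the scalar case and membership of the solution in $\mathcal M$) are sound and consistent with what is already established in the proof of Theorem~\ref{w_jzn}.
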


\par
\noindent \textsc{Acknowledgements:} \it{The authors would like to
thank M.~Jolis for her help with Section~\ref{jzn_mild_res} and J.~Seidler for his valuable suggestions and comments.}

\bibliographystyle{spmpsci}
\bibliography{references}

\end{document}